\definecolor{dblue}{rgb}{0,0,0.70}
\newtheorem{theorem}
{Theorem}[section]	
\newtheorem*{theorem*}{Theorem}
\newaliascnt{lemma}{theorem}
\newtheorem{lemma}[lemma]{Lemma}
\newtheorem*{lemma*}{Lemma}
\newaliascnt{proposition}{theorem}
\newtheorem{proposition}[proposition]{Proposition}
\newaliascnt{corollary}{theorem}
\newtheorem{corollary}[corollary]{Corollary}
\theoremstyle{remark}
\newaliascnt{remark}{theorem}
\newtheorem{remark}[remark]{Remark}
\newaliascnt{question}{theorem}
\newtheorem{question}[question]{Question}
\newaliascnt{conjecture}{theorem}
\newtheorem*{question*}{Question}
\newaliascnt{definition}{theorem}
\newtheorem{definition}[definition]{Definition}
\newaliascnt{example}{theorem}
\renewcommand{\restriction}{\mathbin\upharpoonright}
\newcommand{\axiom}[1]{\mathsf{#1}} 
\newcommand{\ZFC}{\axiom{ZFC}}
\newcommand{\DC}{\axiom{DC}}
\newcommand{\ZF}{\axiom{ZF}}
\newcommand{\Ord}{\mathrm{Ord}}
\newcommand{\GCH}{\axiom{GCH}}
\newcommand{\IS}{\axiom{IS}}
\newcommand{\HS}{\axiom{HS}}
\DeclareMathOperator{\cf}{cf}
\DeclareMathOperator{\dom}{dom}
\DeclareMathOperator{\rng}{rng}
\DeclareMathOperator{\supp}{supp}
\DeclareMathOperator{\sym}{sym}
\DeclareMathOperator{\fix}{fix}
\DeclareMathOperator{\id}{id}
\DeclareMathOperator{\aut}{Aut}
\DeclareMathOperator{\Add}{Add}
\DeclareMathOperator{\otp}{otp}
\newcommand{\forces}{\mathrel{\Vdash}}
\newcommand{\nforces}{\mathrel{\not{\Vdash}}}
\newcommand{\incompatible}{\mathrel{\bot}}
\newcommand{\compatible}{\mathrel{\|}}
\newcommand{\PP}{\mathbb P}
\newcommand{\QQ}{\mathbb Q}
\newcommand{\cB}{\mathcal B}
\newcommand{\sF}{\mathscr F}
\newcommand{\sG}{\mathscr G}
\newcommand{\sH}{\mathscr H}
\newcommand{\sK}{\mathscr K}
\newcommand{\tup}[1]{\langle#1\rangle}
\author{Asaf Karagila}
\thanks{The author was supported by the Royal Society grant no.~NF170989.}
\email[Asaf Karagila]{karagila@math.huji.ac.il}
\urladdr{http://karagila.org}
\address{School of Mathematics,
University of East Anglia.
Norwich, NR4~7TJ, UK
}
\date{\today}
\subjclass[2010]{Primary 03E25; Secondary 03E35}
\keywords{axiom of choice, symmetric extensions, iterations of symmetric extensions, countable union theorem}
\title{The Morris model}
\begin{document}
\begin{abstract}
Douglass B.\ Morris announced in 1970 that it is consistent with $\ZF$ that ``For every $\alpha$, there exists a set $A_\alpha$ which is the countable union of countable sets, and $\mathcal P(A_\alpha)$ can be partitioned into $\aleph_\alpha$ non-empty sets''. The result was never published in a journal (it was proved in full in Morris' dissertation) and seems to have been lost, save a mention in Jech's ``Axiom of Choice''. We provide a proof using modern tools derived from recent work of the author. We also prove a new preservation theorem for general products of symmetric systems, which we use to obtain the consistency of Dependent Choice with the above statement (replacing ``countable union of countable sets'' by ``union of $\kappa$ sets of size $\kappa$'').
\end{abstract}
\maketitle
\section{Introduction}
In 1970 the Notices of the American Mathematical Society published an announcement by Douglass B.\ Morris, a student of Keisler.\footnote{Notices of the American Mathematical Society, \textbf{17} (1970), p.~577.} Morris announced the following consistency result: Every model of $\ZFC$, $M$, can be extended to a model of $\ZF$, $N$, in which for all $\alpha$ there is a set $A_\alpha$ which is a countable union of countable sets, and $\mathcal P(A_\alpha)$ can be mapped onto $\omega_\alpha$. Moreover, this construction does not change cofinalities, and if $V_\alpha^M$ satisfied $\ZF$, then $V_\alpha^N$ also satisfies $\ZF$ and it reflects the above statement. Morris points out that such $N$ cannot be extended to a model of $\ZFC$ without adding ordinals. This theorem is also important as countable union of countable sets might be relatively small, in the sense that they cannot be mapped onto $\omega_2$, but this shows that $\ZF$ alone cannot even prove there is a bound on the power sets of countable unions of countable sets.

The proof appears in full in Morris' thesis \cite{Morris:thesis}, but was never published in a journal. The knowledge of the theorem survived the departure of Morris from the mathematical research through a mention in Jech's ``Axiom of Choice'' \cite{Jech:AC1973}, as Problem~14 in Chapter~5. This problem is marked by two stars indicating that it is a ``difficult (but solved) problem''. No hints as to the way one should arrive to a solution are given.

Morris gives a rough sketch of his argument in his announcement, which to a modern reader is almost unreadable. In a previous version of this manuscript, we speculated as for the content of the proof in Morris' thesis. Since then we managed to come by a copy of the thesis. The proof of Morris is very sophisticated for its time, and we describe it in \S\ref{sec:original}. The original construction satisfies the countable chain condition, and therefore does not change cofinalities. The proof presented in this paper was found independently of Morris' work, and although it is quite similar to the original proof, it requires $\GCH$ in the ground model (or careful bookkeeping and collapsing of some cardinals). It is also easier to generalize the technique used in our proof and obtain better results. In addition there is the benefit of making the proof accessible to modern readers, as the original proof was written in the language of ramified forcing, before the notions of iterations and symmetric extensions were clarified.

The work presented here is a natural development of the work of the author in \cite{Karagila:2018a}, where the author constructs a model of $\ZF$ where Fodor's lemma fails on all regular cardinals. The global construction in that work is presented as an Easton support product of symmetric extensions, or as first performing a preparation class forcing, and then using the machinery of iterations of symmetric extensions with finite support, developed by the author in \cite{Karagila:2016}, and then taking a finite support product of symmetric extensions. In this work the approach using an Easton product argument is doomed to fail, since it hinges on the existence of an outer model of $\ZFC$ with the same ordinals. We hope that this will be a stepping stone that shows the viability of the method of iterations of symmetric extensions, and draw more people to reformulate old and difficult results in this framework, using the guiding principle of iterations: It is sometimes easier to solve your problems one step at a time.

\subsection{In this paper}
We begin by covering some preliminaries about symmetric extensions and cite the necessary theorems about iterations thereof. We then construct a local version of Morris' theorem, before moving on to the amalgamation of the local version into a global statement. We finish by proving a slightly more general preservation theorem for products of symmetric systems, and use it to answer one of Morris' original questions about weak choice principles by showing that similar constructions can satisfy $\DC_{<\kappa}$.

\subsection*{Acknowledgements}
We would like to thank the anonymous referee for their time reading this paper carefully and for their helpful remarks.

\section{Preliminaries}
As this work deals with cardinality without choice, it is a good idea to clarify what we mean by that. Assuming the axiom of choice, every set can be well-ordered and so every set is equipotent to an ordinal, and the least such ordinal is unique. Without choice, however, there might be sets which cannot be well-ordered. We define the cardinal of a set $a$ as either the least ordinal equipotent with $a$, if such ordinal exists, or its Scott cardinal defined by \[|a|=\{b\mid\exists f\colon a\to b\text{ a bijection, and }b\text{ is of minimal rank}\}.\]

We follow the standard forcing terminology for the most part. We say that $\PP$ is a notion of forcing if it is a preordered set with a maximum element called $1$, it is often simpler to assume that $\PP$ is in fact a complete Boolean algebra, this does not change the generality of the definitions and we willingly ignore any unnecessary remarks on these kind of assumptions in favor of readability.

We follow the convention that $q\leq p$ means that $q$ \textit{extends} $p$, or that it is a \textit{stronger condition}. Two conditions $p,q$ are \textit{compatible} if they have a common extension, and we denote that by $p\compatible q$. If $p$ and $q$ are incompatible we write $p\incompatible q$.

For a name $\dot x$, we say that a name \textit{$\dot y$ appears in $\dot x$} if there is some $p$ such that $\tup{p,\dot y}\in\dot x$. We say that a condition $p$ appears in $\dot x$ if the same holds for $p$. 

Finally, if $\{\dot x_i\mid i\in I\}$ is a collection of names, the canonical way to turn that into a name is denoted by $\{\dot x_i\mid i\in I\}^\bullet=\{\tup{1,\dot x_i}\mid i\in I\}$. This notation extends to ordered pairs and sequences in the obvious way. Note that using this notation canonical names for ground model elements have the form $\check x=\{\check y\mid y\in x\}^\bullet$.

\subsection{Symmetric extensions and their iterations}
Let $\PP$ be a forcing and $\pi$ an automorphism of $\PP$. We extend $\pi$ to $\PP$-name by recursion: \[\pi\dot x=\{\tup{\pi p,\pi\dot y}\mid\tup{p,\dot y}\in\dot x\}.\]
\begin{lemma}[The Symmetry Lemma]
Let $\PP$ be a notion of forcing. For every $\pi\in\aut(\PP)$, every condition $p\in\PP$, every $\varphi$ and every $\PP$-name $\dot x$, \[p\forces\varphi(\dot x)\iff\pi p\forces\varphi(\pi\dot x).\qed\]
\end{lemma}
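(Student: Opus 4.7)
My plan is to prove this via the standard semantic route: interpret the forcing relation through generic filters and exploit the fact that an automorphism of $\PP$ induces a bijection on the class of generic filters. Concretely, if $G$ is $\PP$-generic over $V$, then $\pi[G]=\{\pi p\mid p\in G\}$ is again a $\PP$-generic filter (directedness and genericity transfer because $\pi$ is order- and incompatibility-preserving, and $\pi\in V$ means it meets every dense set in $V$). Moreover, $p\in G\iff \pi p\in\pi[G]$, and $V[G]=V[\pi[G]]$ since $\pi$ and $\pi^{-1}$ lie in the ground model.

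The key auxiliary step is the name-interpretation identity
\[
\pi\dot x[\pi[G]] \;=\; \dot x[G],
\]
which I would prove by $\in$-induction on $\dot x$. Unfolding the definition of $\pi\dot x$:
\[
\pi\dot x[\pi[G]] = \{\pi\dot y[\pi[G]]\mid \tup{p,\dot y}\in\dot x,\, \pi p\in\pi[G]\},
\]
and by the induction hypothesis each $\pi\dot y[\pi[G]]=\dot y[G]$, while $\pi p\in\pi[G]\iff p\in G$; this gives exactly $\dot x[G]$.

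Given these two facts, the symmetry lemma is immediate. Assume $p\forces \varphi(\dot x)$. Let $H$ be any generic filter containing $\pi p$, and set $G=\pi^{-1}[H]$, which is generic and contains $p$. By assumption $V[G]\models\varphi(\dot x[G])$. Using $V[G]=V[H]$ and $\dot x[G]=\pi\dot x[H]$, this reads $V[H]\models\varphi(\pi\dot x[H])$. Since $H$ was arbitrary, $\pi p\forces \varphi(\pi\dot x)$. The converse follows by applying the same argument to $\pi^{-1}$.

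The only real content is the interpretation identity; everything else is bookkeeping about how $\pi$ acts on filters. The one mild subtlety to watch is that $\varphi$ may mention other names as parameters — but these can be absorbed into $\dot x$ (e.g., by replacing $\dot x$ with a tuple name), and the induction goes through uniformly. An alternative, entirely internal, route would be induction on the complexity of $\varphi$ using the recursive definition of $\forces$ for atomic formulas (``$\forces \dot y\in \dot z$'' and ``$\forces \dot y=\dot z$''), where one checks directly that $\pi$ permutes the relevant dense sets; this avoids any appeal to generics but is notationally heavier, so I would prefer the semantic version above.
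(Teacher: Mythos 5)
Your proof is correct and is the standard argument; the paper omits the proof entirely (the lemma is stated with a \qed as a well-known fact), and your semantic route via the induced action on generic filters together with the interpretation identity $\pi\dot x[\pi[G]]=\dot x[G]$ is exactly the textbook proof. The one point worth being aware of is that quantifying over generic filters over the full universe $V$ requires either passing to Boolean-valued models, countable transitive models, or the syntactic definition of $\forces$ --- which is precisely the ``entirely internal'' alternative you already flag, so nothing is missing.
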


Suppose that $\sG$ is a group, we say that $\sF$ is a \textit{normal filter of subgroups} if it is a filter on the lattice of subgroups of $\sG$ which is closed under conjugations. Namely, it is a non-empty collection of subgroups which is closed under supergroups and intersections, and whenever $H\in\sF$ and $\pi\in\sG$, $\pi H\pi^{-1}\in\sF$.

\begin{definition}We say that $\tup{\PP,\sG,\sF}$ is a \textit{symmetric system} if $\PP$ is a notion of forcing, $\sG\subseteq\aut(\PP)$ and $\sF$ is a normal filter of subgroups over $\sG$.\footnote{It is enough that $\sF$ is a normal filter base, and we will often define a filter base and ignore the rest of the filter it generates.}
\end{definition}

If $\sG$ is such that for all $p$ and $q$ in $\PP$ there is some $\pi\in\sG$ such that $\pi p\compatible q$, we say that \textit{$\sG$ witnesses the homogeneity of $\PP$}, or that $\tup{\PP,\sG,\sF}$ is a \textit{homogeneous system}. We say that a system is \textit{strongly homogeneous} if for every $H\in\sF$ there is some condition $p$ such that $\pi p=p$ for all $\pi\in H$ and $H$ witnesses the homogeneity of the cone below $p$.

Fix a symmetric system $\tup{\PP,\sG,\sF}$. For a name $\dot x$, $\sym_\sG(\dot x)$ is the subgroup $\{\pi\in\sG\mid\pi\dot x=\dot x\}$. We say that $\dot x$ is $\sF$-symmetric if $\sym_\sG(\dot x)\in\sF$. If this condition holds hereditarily, we say that $\dot x$ is a \textit{hereditarily $\sF$-symmetric} name, and we denote by $\HS_\sF$ the class of all hereditarily $\sF$-symmetric names.

\begin{theorem}
Suppose that $G$ is a $V$-generic filter for $\PP$, and let $M$ be the class $\HS_\sF^G=\{\dot x^G\mid\dot x\in\HS_\sF\}$ in $V[G]$. Then $M$ is a transitive class satisfying $\ZF$ and $V\subseteq M\subseteq V[G]$.\qed
\end{theorem}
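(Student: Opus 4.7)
The plan is to show the four claims in order: $V \subseteq M$, $M \subseteq V[G]$, transitivity of $M$, and then verify the $\ZF$ axioms in $M$. That $M \subseteq V[G]$ is immediate from $\HS_\sF \subseteq V^{\PP}$. For $V \subseteq M$, I would check by $\in$-induction that every canonical name $\check x$ lies in $\HS_\sF$: any $\pi \in \sG$ satisfies $\pi\check x = \check x$ componentwise, so $\sym_\sG(\check x) = \sG \in \sF$; and the inductive hypothesis says every $\check y$ appearing in $\check x$ is already in $\HS_\sF$. Transitivity is even more direct from the definition: if $\dot x \in \HS_\sF$ and $\dot y^G \in \dot x^G$ then some $\dot y'$ appearing in $\dot x$ has $(\dot y')^G = \dot y^G$, and hereditary symmetry of $\dot x$ says $\dot y' \in \HS_\sF$. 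Foundation and Extensionality in $M$ are then consequences of transitivity inside $V[G]$ (which satisfies $\ZFC$).

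For the remaining axioms the strategy is uniform: given a symmetric name (or names), build a new name witnessing the required set, and verify it is itself hereditarily symmetric by exhibiting a subgroup in $\sF$ fixing it. Pairing: $\{\dot x,\dot y\}^\bullet$ has symmetry group containing $\sym_\sG(\dot x) \cap \sym_\sG(\dot y)$, which is in $\sF$ by closure under intersections. Union: for $\dot x \in \HS_\sF$, take the name $\dot u = \{\langle p \land q, \dot z\rangle \mid \exists \dot y\, \langle p,\dot y\rangle \in \dot x,\ \langle q,\dot z\rangle \in \dot y\}$; any $\pi$ fixing $\dot x$ fixes $\dot u$, since $\pi$ acts on the defining set of pairs by a bijection. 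Infinity is handled by $\check\omega \in \HS_\sF$. Separation for a formula $\varphi$ and $\dot x \in \HS_\sF$: use the fact (to be verified by $\in$-induction using the Symmetry Lemma) that $M$ is definable with parameters in $V$, so relativization $\varphi^M$ is expressible, and take $\dot s = \{\langle p, \dot y\rangle \in \dot x \mid p \forces \dot y \in \dot x \land \varphi^M(\dot y, \vec{\dot z})\}$; then $\sym_\sG(\dot s)$ contains $\sym_\sG(\dot x) \cap \bigcap_i \sym_\sG(\dot z_i)$, because $\pi$ in this intersection sends the defining condition to itself by the Symmetry Lemma applied to $\varphi^M$.

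The two genuinely delicate axioms are Power Set and Replacement. For Power Set, given $\dot x \in \HS_\sF$, the idea is to collect a \emph{set} (not a class) of symmetric name-candidates for subsets: fix a rank $\alpha$ large enough by Reflection so that every hereditarily symmetric subset of $\dot x^G$ has a name in $V_\alpha$, and let $\dot P = \{\langle 1,\dot y\rangle \mid \dot y \in V_\alpha \cap \HS_\sF,\ 1 \forces \dot y \subseteq \dot x\}^\bullet$. Since the set $V_\alpha \cap \HS_\sF$ is setwise invariant under every $\pi \in \sG$ (the $\sG$-action preserves rank and hereditary symmetry), we get $\sym_\sG(\dot P) = \sG \in \sF$, and $\dot P$ is hereditarily symmetric; then $\dot P^G$ contains $\mathcal P(\dot x^G) \cap M$ and separation cuts it down. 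Replacement is analogous, using Reflection inside $V[G]$ to bound the ranks of the required name-witnesses and then taking a symmetric collection. The main obstacle, and the point where care is needed, is ensuring that in both cases the collection of candidate names one forms is setwise closed under the action of a suitable subgroup in $\sF$; this rests on the invariance of $\HS_\sF$ and $V_\alpha$ under $\sG$, which is what makes Reflection compatible with the symmetric machinery.
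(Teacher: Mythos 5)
The paper does not actually prove this theorem --- it is stated with a \qed{} as the standard fundamental theorem of symmetric extensions (as in Jech), so there is no ``paper proof'' to match; your outline follows the standard argument and most of it is fine. In particular $V\subseteq M\subseteq V[G]$, transitivity, Pairing, Union, Infinity, and Separation are handled correctly, with the one essential ingredient --- that $\pi\in\sG$ maps $\HS_\sF$ to itself, which is exactly where normality of $\sF$ is used (via $\sym_\sG(\pi\dot x)=\pi\sym_\sG(\dot x)\pi^{-1}$) --- correctly identified at the end as the point that makes the Symmetry Lemma applicable to $\varphi^M$.

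The genuine gap is in Power Set. You want an $\alpha$ such that every hereditarily symmetric subset of $\dot x^G$ has a symmetric name in $V_\alpha$, and you propose to get it ``by Reflection''. But the instances here range over the \emph{proper class} of all names $\dot y\in\HS_\sF$ with $\forces\dot y\subseteq\dot x$, and neither Reflection nor Collection bounds witnesses over a proper class of instances; as written the step does not go through. The missing idea is a name-reduction lemma: for any $\dot y\in\HS_\sF$ with $p\forces\dot y\subseteq\dot x$, replace $\dot y$ by $\dot y'=\{\tup{q,\dot w}\mid \dot w\text{ appears in }\dot x,\ q\forces^{\HS}\dot w\in\dot y\}$. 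Then $p\forces\dot y=\dot y'$, the rank of $\dot y'$ is bounded in terms of $\dot x$ and $\PP$ alone (so the candidates automatically form a set and no Reflection is needed), and $\dot y'\in\HS_\sF$ because $\sym_\sG(\dot y')\supseteq\sym_\sG(\dot y)\cap\sym_\sG(\dot x)\in\sF$. With that lemma your $\dot P$ works as intended. (For Replacement your appeal to Collection is unproblematic, since there the instances are indexed by the pairs $\tup{p,\dot y}$ with $\dot y$ appearing in $\dot x$, which do form a set.)
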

The class $M$ in the theorem is often referred to as a \textit{symmetric extension}. We can define the symmetric forcing relation $\forces^\HS$ as the relativization of $\forces$ to the class $\HS_\sF$, and we can prove that $\HS_\sF^G\models\varphi(\dot x^G)$ if and only if $\exists p\in G:p\forces^\HS\varphi(\dot x)$. Moreover, the symmetry lemma holds for $\forces^\HS$ whenever the permutations come from $\sG$. Note that by transitivity of $M$, for bounded formulas, $\forces$ and $\forces^\HS$ are the same.

If the symmetric system is clear from context, we omit $\sG$ and $\sF$ from the subscripts and the terminology.

We can iterate symmetric extensions when using finite support, this theory was developed by the author in \cite{Karagila:2016}. The theory itself is fairly comprehensive, but we will only use a small fraction of it in this paper. Specifically, we will use the preservation theorems (Theorems~9.2,~9.4 in \cite{Karagila:2016}).
\begin{theorem}
Let $\tup{\QQ_\alpha,\sG_\alpha,\sF_\alpha\mid\alpha\in\Ord}$ be a finite support iteration of symmetric extensions such that for all $\alpha$, $\forces_\alpha\tup{\QQ_\alpha,\sG_\alpha,\sF_\alpha}$ is a homogeneous system. Assume that for any $\eta$ there is some $\alpha^*$, such that for all $\alpha\geq\alpha^*$, the $\alpha$th symmetric extension does not add new sets of rank $\eta$. Then no sets of rank $\leq\eta$ are added by limit steps either. In particular the end model satisfies $\ZF$.
\end{theorem}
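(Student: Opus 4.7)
The plan is to prove by induction on $\eta$ that at every limit ordinal $\delta$, every set of rank $\leq\eta$ in the symmetric extension $N_\delta$ already lies in $N_\alpha$ for some $\alpha<\delta$; combined with the hypothesis this yields $V_\eta^{N_\delta}=V_\eta^{N_{\alpha^*}}$ for all sufficiently large $\alpha^*<\delta$. The base case $\eta=0$ is vacuous. For the inductive step, fix a limit $\delta$, a hereditarily symmetric name $\dot x\in\HS_{\sF_\delta}$, and a condition $p\in\PP_\delta$ forcing $\dot x$ to have rank $\leq\eta$; the goal is to find $\alpha<\delta$ with $\dot x^{G_\delta}\in N_\alpha$.

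By the induction hypothesis every element of $\dot x^{G_\delta}$ has rank $<\eta$ and hence already appears in some earlier $N_\beta$. A standard mixing argument then allows me to assume, below a dense set of extensions of $p$, that every inner name $\dot y$ mentioned in $\dot x$ is itself drawn from $\HS_{\sF_{\beta(\dot y)}}$ for some $\beta(\dot y)<\delta$. Next, let $H=\sym_{\sG_\delta}(\dot x)\in\sF_\delta$; by the way $\sF_\delta$ is built from the $\sF_\alpha$ via finite support in \cite{Karagila:2016}, $H$ contains a ``tail subgroup'': there is $\alpha_0<\delta$ such that every $\pi\in\sG_\delta$ that is the identity on the first $\alpha_0$ coordinates and takes admissible values on later coordinates belongs to $H$. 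Using the hypothesis, pick $\alpha^*\in[\alpha_0,\delta)$ large enough that no new set of rank $\leq\eta$ is added at any successor step $\geq\alpha^*$.

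The crux is then to show that $\dot x^{G_\delta}$ depends only on $G_{\alpha^*}$, i.e.\ is invariant under the choice of tail generic $G_{[\alpha^*,\delta)}$. Homogeneity of each $\QQ_\beta$ for $\beta\geq\alpha^*$ together with the tail subgroup inside $H$ identifies any two tail generics extending $G_{\alpha^*}$ via some $\pi\in H$, which fixes $\dot x$; simultaneously, the hypothesis on successor steps above $\alpha^*$ prevents the appearance of any genuinely new rank-$\leq\eta$ element. Hence the sequence $\tup{\dot x^{G_\alpha}:\alpha\in[\alpha^*,\delta)}$ is eventually constant and equals $\dot x^{G_\delta}$, placing the latter in $N_{\alpha^*}$. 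The ``in particular'' clause follows immediately: for each $\eta$ the sets $V_\eta^{N_\alpha}$ eventually stabilize, so in the direct-limit end model $V_\eta$ is a set, validating Replacement.

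I expect the main obstacle to be exactly this projection step, which converts hereditary symmetry plus bounded rank into a genuine reduction of $\dot x$ to a name at stage $\alpha^*$. It requires a careful analysis of the construction of $\sF_\delta$ in finite-support iterations and a use of homogeneity of each factor to intertwine distinct tail generics by elements of $H$; the mixing step and the choice of $\alpha^*$ are routine by comparison.
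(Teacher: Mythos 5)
Your skeleton is the right one and essentially matches the standard argument (note that the paper does not prove this statement itself --- it is imported from Theorems~9.2 and~9.4 of \cite{Karagila:2016}; the closest in-paper analogue is the sketch of \autoref{thm:preservation} for $\kappa$-support products, which runs the same way): reduce the elements of $\dot x$ to earlier stages, observe that $\sym(\dot x)$ contains a ``tail'' subgroup because supports are finite and hence bounded below the limit $\delta$, and use homogeneity to conclude that $\dot x$ is equivalent to a name over a proper initial segment of the iteration.

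The crux step as you literally state it, however, does not work. You claim that homogeneity ``identifies any two tail generics extending $G_{\alpha^*}$ via some $\pi\in H$''. Automorphisms live in the ground model and act on conditions; homogeneity only says that for every $p,q$ there is $\pi$ with $\pi p\compatible q$, and in general no single $\pi$ maps one prescribed generic filter onto another. The correct mechanism is a density argument at the level of the forcing relation, exactly as in \autoref{lem:upwards-homogeneity} and the proposition following it: if $E$ is a (finite, hence bounded in $\delta$) support for $\dot x$ and $p$ decides $\dot u\in\dot x$ for a name $\dot u$ from an earlier stage, then $p\restriction E$ already decides it, since any $q\leq p\restriction E$ deciding it the other way could be moved by some $\pi\in\fix(E)\subseteq\sym(\dot x)$ to a condition compatible with $p$, contradicting the Symmetry Lemma. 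This also repairs your ``eventually constant sequence $\tup{\dot x^{G_\alpha}:\alpha\in[\alpha^*,\delta)}$'', which is not well defined ($\dot x$ is a $\PP_\delta$-name and cannot be interpreted by $G_\alpha$); the precise conclusion is that $\dot x$ is equivalent to a $\PP_\alpha$-name for some $\alpha<\delta$, which is exactly what ``not added at the limit step'' means. Finally, be aware that in an iteration (as opposed to a product) the later iterands and their automorphism groups are themselves names depending on the earlier coordinates, so the existence of your tail subgroup inside $\sym(\dot x)$ is precisely the content of the excellent-support machinery of \cite{Karagila:2016} and is not free; your closing caveat correctly locates the real work there, but that is the part that cannot be waved away.
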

In other words, assuming each iterand is homogeneous, if after some stage $\alpha^*$ we no longer add sets to $V_\eta$ at each successor step, then the same holds for limit stages. If this is true for all $\eta$, $\ZF$ is preserved. This is important, since finite support iterations tend to add Cohen reals and collapse cardinals when the forcings are not c.c.c.\ themselves. Which would be disastrous for us, and in fact would require us to check the axioms of $\ZF$ hold in the resulting model by hand (see proofs by Gitik in \cite{Gitik:1980} and Fernengel--Koepke in \cite{Fernengel-Koepke:2016} for example).

\section{Local version}\label{sect:local}
Let $\kappa$ be a regular cardinal, and without loss of generality $\kappa^{<\kappa}=\kappa$. We first construct a symmetric extension in which there is a set which is a countable union of countable sets, then we construct a symmetric extension of that model in which no sets of ordinals are added, where the aforementioned set's power set can be mapped onto $\kappa$.

\subsection{The first symmetric extension}
Let $\PP$ be $\Add(\kappa,\omega\times\omega\times\kappa)$, so a condition is a function $p\colon\omega\times\omega\times\kappa\times\kappa\to 2$ such that $|\dom p|<\kappa$.

Let us define some canonical names for our objects of interest:
\begin{enumerate}
\item $\dot x_{n,m,\alpha}=\{\tup{p,\check\beta}\mid p(n,m,\alpha,\beta)=1\}$,
\item $\dot a_{n,m} = \{\dot x_{n,m,\alpha}\mid\alpha<\kappa\}^\bullet$,
\item $\dot A_n = \{\dot a_{n,m}\mid m<\omega\}^\bullet$,
\item $\vec A=\tup{\dot A_n\mid n<\omega}^\bullet$.
\end{enumerate}

The goal is to have all of these names symmetric, and while the canonical names for the enumeration of each $\dot A_n$ should be symmetric, we do not want them to be uniformly symmetric, since our goal is to have the union of these $\dot A_n$'s our set whose power set will (eventually) be mapped onto $\kappa$.

The automorphism group we use is that of permutations $\pi$ of $\omega\times\omega\times\kappa$, such that if $\pi(n,m,\alpha)=(n',m',\alpha')$, then $n=n'$ and $\pi``\{n\}\times\{m\}\times\kappa=\{n\}\times\{m'\}\times\kappa$. Namely, we first apply a permutation of $\omega$ to the second coordinate, and then separately for each $m$, $\pi(n,m,\cdot)$ is a permutation of $\kappa$. In group theoretic terms, $\sG$ would be the wreath product, or $\sG=\{\id\}\wr S_\omega\wr S_\kappa$. Of course, the action on $\PP$ is standard and given by \[\pi p(\pi(n,m,\alpha),\beta)=p(n,m,\alpha,\beta).\]

Let us denote by $\pi_n$ and $\pi_{n,m}$ the permutations which are obtained by fixing $n$ and both $n,m$ respectively. We shall denote by $\pi_n^*$ the permutation of $\omega$ given by $\pi_n^*(m)=m'$ if and only if $\pi(n,m,0)=(n,m',\alpha)$ for some $\alpha$.

\begin{proposition}
Let $\pi\in\sG$, then
\begin{enumerate}
\item $\pi\dot x_{n,m,\alpha}=\dot x_{n,\pi_n(m,\alpha)}$; 
\item $\pi\dot a_{n,m}=\dot a_{n,\pi_n^* m}$; 
\item $\pi\dot A_n=\dot A_n$; and
\item $\pi\vec A=\vec A$.\qed
\end{enumerate}
\end{proposition}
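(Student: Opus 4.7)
The plan is to prove the four claims in order, since each relies on unfolding the action of $\pi$ on names that are built out of the previous ones. Throughout I would use only the recursive definition $\pi\dot x=\{\tup{\pi p,\pi\dot y}\mid\tup{p,\dot y}\in\dot x\}$, the fact that $\pi\check\beta=\check\beta$ for any ground model $\beta$, and the two structural constraints on $\sG$: that every $\pi\in\sG$ fixes the first coordinate $n$, and that $\pi``\{n\}\times\{m\}\times\kappa=\{n\}\times\{\pi_n^*m\}\times\kappa$. The latter is precisely what makes $\pi_n^*$ well-defined as a permutation of $\omega$.

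For (1), I would unfold $\pi\dot x_{n,m,\alpha}$ as $\{\tup{\pi p,\check\beta}\mid p(n,m,\alpha,\beta)=1\}$. The condition on $p$ is equivalent to $(\pi p)(\pi(n,m,\alpha),\beta)=1$ by the definition of the action on $\PP$, so after substituting $q=\pi p$ the name becomes $\{\tup{q,\check\beta}\mid q(n,\pi_n(m,\alpha),\beta)=1\}$, which is $\dot x_{n,\pi_n(m,\alpha)}$. For (2), apply $\pi$ to $\dot a_{n,m}$ and use (1): the result is $\{\dot x_{n,\pi_n(m,\alpha)}\mid\alpha<\kappa\}^\bullet$. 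As $\alpha$ ranges over $\kappa$, the pair $\pi_n(m,\alpha)$ traces out precisely $\{\pi_n^*m\}\times\kappa$ (this is the content of the structural constraint on $\pi_n$), so the collection of names appearing is $\{\dot x_{n,\pi_n^*m,\beta}\mid\beta<\kappa\}$, yielding $\dot a_{n,\pi_n^*m}$.

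For (3), applying $\pi$ to $\dot A_n$ and invoking (2) gives $\{\dot a_{n,\pi_n^*m}\mid m<\omega\}^\bullet$; since $\pi_n^*$ is a permutation of $\omega$, reindexing shows this equals $\dot A_n$. For (4), the sequence name $\vec A=\tup{\dot A_n\mid n<\omega}^\bullet$ is a canonical name built from ordered pairs of the form $\tup{\check n,\dot A_n}$, and $\pi$ fixes $\check n$ (it acts trivially on ground-model ordinals) and fixes $\dot A_n$ by (3), so $\pi\vec A=\vec A$. The main obstacle is not mathematical but purely notational: keeping track of which factor of the wreath product $\{\id\}\wr S_\omega\wr S_\kappa$ is responsible for each substitution, and being careful that the ``block'' behavior of $\pi_n$ on $\{m\}\times\kappa$ is exactly what is needed to reindex the set-builder in part (2).
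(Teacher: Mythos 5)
Your argument is correct and is exactly the routine unfolding of the definitions that the paper omits (the proposition is stated with a \qed and no proof, as it is considered immediate). Each of your four steps --- the substitution $q=\pi p$ in (1), the block condition $\pi``\{n\}\times\{m\}\times\kappa=\{n\}\times\{\pi_n^*m\}\times\kappa$ for (2), and the reindexing by the permutations $\pi_n^*$ and the trivial action on the first coordinate for (3) and (4) --- is the intended verification.
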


For $E\subseteq\omega\times\omega\times\kappa$, denote by $\fix(E)$ the group \[\{\pi\in\sG\mid\forall\tup{n,m,\alpha}\in E:\pi^*_n=\id\text{ and }\pi\restriction E=\id\}.\] Namely, $\fix(E)$ is the group of permutations in $\sG$ which do not move the $A_n$'s which are mentioned in $E$, as well as the coordinates which are in $E$ otherwise. We let $\sF$ be the normal filter of subgroups generated by $\fix(E)$ for a finite $E$. We say that $E$ is a \textit{support} for a name $\dot x$ if $\fix(E)\subseteq\sym(\dot x)$.

\begin{proposition}
All the above names are in $\HS$.
\end{proposition}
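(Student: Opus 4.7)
The plan is to verify the hereditary condition from the bottom of the name rank hierarchy upward, using the previous proposition as the main computational tool, and exhibiting a finite support $E$ in each case.

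First, I would observe that every canonical check name $\check\beta$ is trivially in $\HS$: its symmetry group is all of $\sG$ (since automorphisms act trivially on check names), and induction on rank gives hereditary symmetry for check names of ground model sets.

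Next, for $\dot x_{n,m,\alpha}$ I would take $E=\{(n,m,\alpha)\}$ and show $\fix(E)\subseteq\sym(\dot x_{n,m,\alpha})$. By the previous proposition $\pi\dot x_{n,m,\alpha}=\dot x_{n,\pi_n(m,\alpha)}$, and for $\pi\in\fix(E)$ the condition $\pi_n^*=\id$ forces the second coordinate to remain $m$, while $\pi\restriction E=\id$ then pins the third coordinate to $\alpha$. Hereditary symmetry is immediate since the names appearing in $\dot x_{n,m,\alpha}$ are all of the form $\check\beta$.

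For $\dot a_{n,m}$ I would take $E=\{(n,m,0)\}$. Any $\pi\in\fix(E)$ satisfies $\pi_n^*=\id$, so by the previous proposition $\pi\dot a_{n,m}=\dot a_{n,\pi_n^*m}=\dot a_{n,m}$; the names $\dot x_{n,m,\alpha}$ appearing in $\dot a_{n,m}$ are in $\HS$ by the preceding step. Finally, the proposition gives $\pi\dot A_n=\dot A_n$ and $\pi\vec A=\vec A$ for every $\pi\in\sG$, so the empty set is a support for both, and hereditary symmetry follows from the $\dot A_n$'s being built from the already-verified $\dot a_{n,m}$'s.

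There is no real obstacle here: the content of the proposition is essentially bookkeeping, made trivial by the fact that the previous proposition was arranged so that $\sG$ acts on the indices exactly as the wreath-product structure dictates. The only point requiring any care is distinguishing the role of the two clauses in the definition of $\fix(E)$ — $\pi_n^*=\id$ is what secures symmetry of $\dot a_{n,m}$ from a support mentioning only the $(n,m)$-slice, while the pointwise fixing $\pi\restriction E=\id$ is what controls the deepest level $\dot x_{n,m,\alpha}$.
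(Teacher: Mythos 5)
Your proof is correct and follows the same route as the paper: exhibit a singleton support $\set{(n,m,\alpha)}$ for $\dot x_{n,m,\alpha}$ and $\dot a_{n,m}$ (the paper uses the same triple for both, which works just as well as your $(n,m,0)$), and note that every $\pi\in\sG$ fixes $\dot A_n$ and $\vec A$ outright. Your version simply spells out the bookkeeping the paper leaves implicit.
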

\begin{proof}
Taking $\{\tup{n,m,\alpha}\}$ is a support for $\dot x_{n,m,\alpha}$ and $\dot a_{n,m}$. Any permutation preserves $\dot A_n$ for any $n$, and therefore $\vec A$.
\end{proof}
The following proposition shows that $\bigcup \dot A_n$ is a name of an uncountable set which is countable union of countable sets, it is also a good exercise in symmetry arguments.
\begin{proposition}\label{prop:countable-union-is-uncountable}
$\forces^\HS\forall n<\omega,|\dot A_n|=\aleph_0$ and $|\bigcup\dot A_n|>\aleph_0$.
\end{proposition}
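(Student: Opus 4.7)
The plan is to split the statement into its two halves and verify each with a separate symmetry argument. For the countability of each $\dot A_n$, I would exhibit a specific symmetric enumeration; for the uncountability of the union, I would run the standard contradiction showing that no hereditarily symmetric name can force a surjection from $\omega$ onto $\bigcup\dot A_n$.

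For the first half, consider the canonical enumeration name
\[
\dot e_n=\{\tup{\check m,\dot a_{n,m}}\mid m<\omega\}^\bullet.
\]
Any $\pi\in\fix(\{\tup{n,0,0}\})$ has $\pi_n^*=\id$ by definition of $\fix$, so $\pi\dot a_{n,m}=\dot a_{n,\pi_n^*m}=\dot a_{n,m}$, and hence $\pi\dot e_n=\dot e_n$. Thus $\dot e_n\in\HS$ with support $\{\tup{n,0,0}\}$. A routine genericity argument shows $\forces \dot a_{n,m}\neq\dot a_{n,m'}$ for $m\neq m'$ (they are built from disjoint blocks of Cohen conditions), so $\dot e_n$ is forced to be a bijection from $\check\omega$ onto $\dot A_n$ in $\HS$.

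For the second half, suppose toward a contradiction that some $p$ forces $\dot f\colon\check\omega\to\bigcup\dot A_n$ to be a surjection, with $\dot f\in\HS$ having finite support $E$. Choose $n<\omega$ such that no triple in $E$ has first coordinate $n$; this is possible since $E$ is finite. Some $q\leq p$ and some $k<\omega$ satisfy $q\forces\dot f(\check k)=\dot a_{n,0}$. I would now construct $\pi\in\fix(E)$ that is the identity outside the $n$-layer and that swaps the columns $m=0$ and $m=1$ within the $n$-layer, so that $\pi_n^*$ interchanges $0$ and $1$ while still satisfying $\pi q\compatible q$. Since no triple in $E$ has first coordinate $n$, $\pi$ is allowed to be anything in the wreath factor for that layer. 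Concretely, on the $n$-layer let $\pi(n,0,\alpha)=(n,1,\sigma(\alpha))$ and $\pi(n,1,\alpha)=(n,0,\sigma^{-1}(\alpha))$ for a permutation $\sigma$ of $\kappa$ chosen to move the projections $D_0,D_1\subseteq\kappa$ of $\dom q$ (in columns $0$ and $1$ of the $n$-layer) into the complement of $D_0\cup D_1$; this is possible because $|\dom q|<\kappa$ and $\kappa^{<\kappa}=\kappa$. Then $\pi q$ and $q$ have disjoint domains on the affected columns, so $\pi q\compatible q$.

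Applying the symmetry lemma and $\pi\in\sym(\dot f)$, $\pi q\forces\dot f(\check k)=\pi\dot a_{n,0}=\dot a_{n,1}$. Hence $q\cup\pi q\forces\dot a_{n,0}=\dot a_{n,1}$, contradicting distinctness. This completes the proof of $|\bigcup\dot A_n|>\aleph_0$ in $\HS$. The main obstacle is the bookkeeping for compatibility of $\pi q$ and $q$: choosing $\pi$ so that it swaps column indices $m$ while simultaneously shifting the $\kappa$-coordinates enough that no cell of $\dom q$ is hit by two conflicting assignments. Once this is set up, everything else is a direct application of the symmetry lemma and the definition of $\fix(E)$.
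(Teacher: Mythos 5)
Your proof is correct and follows essentially the same symmetry argument as the paper: pick a layer $n$ disjoint from the support of $\dot f$, then apply a permutation that swaps two columns of that layer while simultaneously shifting the $\kappa$-coordinates so that $\pi q$ stays compatible with $q$, yielding a common extension forcing $\dot a_{n,0}=\dot a_{n,1}$. The only cosmetic difference is that you refute a surjection from $\omega$ onto the union, whereas the paper refutes a uniform injective enumeration with $f(n,m)\in A_n$; both versions rest on the identical compatibility-plus-symmetry contradiction.
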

\begin{proof}
The first part is an easy consequence of the above proposition. The name $\tup{\dot a_{n,m}\mid m<\omega}^\bullet$ of the canonical enumeration is fixed pointwise once $(n,m,\alpha)\in E$ for some $m,\alpha$. It is enough to prove that there is no uniform enumeration of all the $\dot A_n$'s which is in $\HS$ in order to show that the union is uncountable.

Indeed, suppose that $\dot f\in\HS$ was such that $p\forces``\forall n,m<\omega\ \dot f(\check n,\check m)\in\dot A_n$ and $\dot f$ is injective''. Let $E$ be a support for $\dot f$, and let $n$ be such that $\tup{n,m,\alpha}\notin E$ for all $m,\alpha$. By extending $p$ if necessary  we can assume that for some $m<\omega$, $p\forces\dot f(\check n,\check m)=\dot a_{n,k}$ for some $k$. Let $k'\neq k$, we can find two permutations of $\kappa$, such that for $\pi\in\sG$ for which $\pi^*_n$ is the $2$-cycle switching $k$ and $k'$, and our two permutations of $\kappa$ are $\pi_{n,k}$ and $\pi_{n,k'}$ respectively, such that $\pi p$ is compatible with $p$. For example, let $\alpha<\kappa$ be larger than $\sup\{\beta\mid\tup{n,k,\beta}\in\dom p\}$ and let $\pi_{n,k}$ be the permutation switching the blocks $[0,\alpha)$ and $[\alpha,\alpha+\alpha)$. In fact, by choosing $\alpha$ to be large enough, we can even assume that $\pi_{n,k}=\pi_{n,k'}$. Anywhere else, define $\pi$ as the identity.

We now that $\pi p\forces\pi\dot f(\check n,\check m)=\pi\dot a_{n,k}=\dot a_{n,k'}$, but since $\pi p$ and $p$ are compatible this means that $p\nforces``\dot f$ is injective'', Therefore the union is uncountable.
\end{proof}
Note that the proof in fact shows that any countable subset of the union is in fact a subset of finitely many $A_n$'s.

\subsection{The second symmetric extension}
Let $G$ be a $V$-generic filter for $\PP$, and let $M$ be the symmetric extension above. We omit the dots of the names we defined above to denote their interpretation in $M$. Namely, $A_n=\dot A_n^G$ etc.

\begin{definition}
Let $T$ be the choice tree from $\vec A$. Namely, $T=\bigcup_{n<\omega}\prod_{k<n}A_k$, ordered by inclusion. 
\end{definition}
By the arguments similar to \autoref{prop:countable-union-is-uncountable}, it is not hard to see that $T$ has no branches in $M$.

For $s\in\omega^{<\omega}$ define $\dot t_s$ as $\tup{\dot a_{i,s(i)}\mid i\in\dom s}^\bullet$. Given a $\PP$-name $\dot t$ and a condition $p\in\PP$ forcing $\dot t\in\dot T$, by extending $p$ finitely many times if necessary, we can decide the values of $t$ and thus ensure that it has the form $\dot t_s$ for some $s\in\omega^{<\omega}$. Note that $\dot t_{s\restriction k}=\dot t_s\restriction k$ as well.

We would like to add branches to $T$, which are subsets of $A$, so that the new subsets can be partitioned into $\kappa$ parts. Of course, the goal is to do so without adding any subsets to the original ground model. This is important for applying the preservation theorem in the global case. It also shows that under $\GCH$ no cardinals are collapsed. 
\medskip

Let us digress from the proof to discuss the motivation behind the definitions we are about to give. The main tool for achieving our goal would be to define a homogeneous symmetric system in $M$, such that conditions can be also moved by applying permutations of $\PP$. This would mean that if $\dot a$ is a symmetric name for a subset of $V$, we can first restrict the relevant conditions of the symmetric system in $M$ by means of homogeneity, and then look at it as a name in the iteration given by $\PP$ and our forcing, and we will apply permutations of $\PP$ to ensure that we only need to extend conditions in $\PP$ to decide $\check b\in\dot a$. In turn it means that in $M$, $\dot a$ was a name for a ground model (read: $M$-) set.

Naturally, we want to force with some copies of $T$, say $\kappa\times\omega$ of them, and we will allow permutations of each $\omega$-block as to ``fuzzy things out''. The natural thing is to use finite support products. However, when we come to apply the clever trick of going back to $\PP$ and using its automorphisms to make two conditions compatible we would run into a problem. Given a condition with just two points $t_0$ and $t_1$, if we want to make it compatible with a condition having $s_0$ and $s_1$ in its nontrivial part, and the $n$th level of both $s_0$ and $s_1$ chose the same element of $A_{n-1}$, while $t_0$ and $t_1$ chose different elements, then we are stuck. We cannot move the element chosen by $t_0$ to the one chosen by $s_0$, because then we cannot move the corresponding choice at $t_1$. Remember, we are applying automorphisms of $\PP$, so we are moving the underlying sets, the $A_n$'s and in a rather global way.

The solution for this is to have ``injective conditions'', these would add branches which are completely disjoint from one another. But this is a problem, since that would add a collapsing function making $\kappa$ countable. This happens because we can ask what is the least $\alpha$ that has the $m$th element of $A_0$ in a set which is in the $\alpha$th block of the sequence. 

There are two apparent solution. The first, note that there are sets in $M$ which are Dedekind-finite and can be mapped onto $\kappa$ (e.g.\ each $a_{n,m}$), and we can simply add ``that many pairwise disjoint branches''. If we are careful, we will not destroy the Dedekind-finiteness, and thus add only a surjection, rather than an injection. This seems a bit ad-hoc and incidental. Instead, we will take a different approach, where we keep track of the disjointness of our conditions. This will allow us to appeal to the original argument, but it will be sufficiently non-descript that no subsets of $V$ will be added, and in particular no sets of ordinals.

\begin{definition}
Suppose that $t_0,\ldots,t_{n-1}$ are finite branches in $T$. We say that the sequence is \textit{$m$-injective} if for all $n>m$, $t_i(n)\neq t_j(n)$ for $i<j<n$. 
\end{definition}

For a sequence $\vec t\in T^{\kappa\times\omega}$, let $\supp(\vec t)$ denote the set of pairs $\tup{\alpha,n}$ such that the $\tup{\alpha,n}$th branch in $\vec t$ is nontrivial. We use the standard terminology of \textit{support} here. While this term is quite extensively used in different contexts in this paper, these uses are deeply connected to one another. For readability, we will write $t$ etc., to denote $\vec t$.
\medskip

Define $\QQ$ to be the forcing with conditions $\tup{t,f_t}$ such that $ t\in T^{\kappa\times\omega}$ is a finite sequence of trees, and $f_t\colon\mathcal P(\supp( t))\to\omega$ such that $f_t$ is $\subseteq$-non decreasing and $ t\restriction a$ is $f_t(a)$-injective, this $f_t$ is the \textit{disjointing function of $ t$}. The \textit{support} of a condition is the support of its $ t$-part.

If $E\subseteq\kappa\times\omega$, write $\tup{t,f_t}\restriction E$ to denote the condition $\tup{t\restriction E,f_t\restriction(E\cap\supp( t)}$. In particular, the support of $\tup{t,f_t}\restriction E$ is a subset of $E$.

For $\tup{t,f_t}$ and $\tup{s,f_s}$ in $\QQ$ we say that $\tup{t,f_t}\leq\tup{s,f_s}$ if the following conditions hold:
\begin{enumerate}
\item $\supp( s)\subseteq\supp( t)$ and for all $\tup{\alpha,n}\in\supp( s)$, $s_{\tup{\alpha,n}}\subseteq t_{\tup{\alpha,n}}$.
\item For all $a\subseteq\supp( s)$, $f_t(a)\leq f_s(a)$.
\end{enumerate}

Clearly, if $f$ and $g$ are two functions such that $\tup{t,f},\tup{t,g}\in\QQ$, then $\tup{t,f}$ and $\tup{t,g}$ are compatible.

Every condition in $\QQ$ has a canonical name, where $ t$ is composed of names of the form $\dot t_s$ for some $s\in\omega^{<\omega}$ along with their index, and $\dot f_t$ has a canonical name given by the behavior of the function $f_t$ as a function from a power set of the suitable finite subset of $\kappa\times\omega$ into $\omega$.

We define $\sH$ to be the group of permutations $\pi$ of $\kappa\times\omega$ such that for all $\alpha$, there is some $\pi_\alpha$ which is a permutation of $\omega$ and $\pi(\alpha,n)=\tup{\alpha,\pi_\alpha(n)}$. These act on $\QQ$ in the standard way, applying $\pi$ to $ t$ simply acts on the support, and it is not hard to verify that this action extends to $f_t$ in the appropriate manner.

Finally, as would be expected, our filter of subgroups is defined as generated by $\fix(E)$ for $E\in[\kappa\times\omega]^{<\omega}$, where $\fix(E)=\{\pi\in\sG\mid\pi\restriction E=\id\}$. As with our standard terminology $E$ is a support for a name $\dot x$ when $\fix(E)\subseteq\sym(\dot x)$. 

For $\tup{\alpha,n}\in\kappa\times\omega$, we define the name for the set added by the $\tup{\alpha,n}$th branch: \[\dot B_{\alpha,n}=\{\tup{\tup{t,f_t},\check a}\mid\supp( t)=\{\tup{\alpha,n}\}\land\exists m: t(\alpha,n,m)=a\}.\]
Clearly, for $\pi\in\sH$, $\pi\dot B_{\alpha,n}=\dot B_{\pi(\alpha,n)}$. It is also clear that each $\dot B_{\alpha,n}$ is in $\HS$, since $\{\tup{\alpha,n}\}$ is a support witnessing that. It is also clear that for each $\alpha$, $\dot\cB_\alpha=\{\dot B_{\alpha,n}\mid n<\omega\}^\bullet$ and $\tup{\dot\cB_\alpha\mid\alpha<\kappa}^\bullet$ are preserved by all the automorphisms in $\sG$, and are therefore in $\HS$ as well. So $A$, which remains a countable union of countable sets in the symmetric extension, will admit a surjection from $\mathcal P(A)$ onto $\kappa$, as wanted.

\begin{proposition}
Suppose that $\dot x\in\HS$ such that for some $a$, $\forces\dot x\subseteq\check a$. Let $E$ be a support for $\dot x$, if $\tup{t,f_t}$ decides $\check b\in\dot x$, then $\tup{t,f_t}\restriction E$ already decides the same.
\end{proposition}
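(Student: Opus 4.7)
The plan is a standard homogeneity-style argument tailored to the unusual shape of conditions in $\QQ$. Without loss of generality assume $\tup{t,f_t}\forces^\HS\check b\in\dot x$ (the other case is symmetric). Towards a contradiction, suppose $\tup{t,f_t}\restriction E$ does not decide $\check b\in\dot x$; then there exists $\tup{s,f_s}\leq\tup{t,f_t}\restriction E$ with $\tup{s,f_s}\forces^\HS\check b\notin\dot x$. I would aim to produce $\pi\in\fix(E)$ such that $\pi\tup{s,f_s}$ is compatible with $\tup{t,f_t}$. Since $\fix(E)\subseteq\sym(\dot x)$ and $\pi$ fixes the ground-model name $\check b$, the Symmetry Lemma then gives $\pi\tup{s,f_s}\forces^\HS\check b\notin\dot x$, and any common extension would force both $\check b\in\dot x$ and $\check b\notin\dot x$, which is absurd.

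To construct $\pi$, for each $\alpha<\kappa$ let $E_\alpha=\{n\mid(\alpha,n)\in E\}$, $T_\alpha=\{n\mid(\alpha,n)\in\supp(t)\}$, and $S_\alpha=\{n\mid(\alpha,n)\in\supp(s)\}$; these are all finite. Since $\omega$ is infinite, I can pick a permutation $\pi_\alpha$ of $\omega$ fixing $E_\alpha$ pointwise and mapping $S_\alpha\setminus E_\alpha$ into $\omega\setminus(E_\alpha\cup T_\alpha\cup S_\alpha)$; for $\alpha$ not appearing in $\supp(s)$ put $\pi_\alpha=\id$. Setting $\pi(\alpha,n)=(\alpha,\pi_\alpha(n))$ yields an element of $\sH\cap\fix(E)$. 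By construction $\supp(\pi s)\cap\supp(t)=\supp(t)\cap E$, and on this intersection $\pi s$ refines $t$, since $\pi$ fixes $E$ pointwise and $s$ already refines $t\restriction E$.

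The hard part is the last one, checking compatibility in detail. The underlying sequence $u=\pi s\cup t$ is well-defined because the two sequences agree on their only overlap, which is $\supp(t)\cap E$, where $\pi s$ extends $t$. For the disjointing function, let $N$ exceed both the (finitely many) values of $f_{\pi s}$ and $f_t$ on every subset of their respective supports and the length of every branch occurring in $u$, and set $f_u\equiv N$. Then $f_u$ is trivially $\subseteq$-nondecreasing, it dominates $f_{\pi s}$ and $f_t$ wherever the order on $\QQ$ demands, and $u\restriction a$ is vacuously $N$-injective for every $a\subseteq\supp(u)$ because no branch appearing in $u\restriction a$ is even defined at level $N$. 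Consequently $\tup{u,f_u}\in\QQ$ and $\tup{u,f_u}\leq\pi\tup{s,f_s},\tup{t,f_t}$, closing the contradiction and completing the proof.
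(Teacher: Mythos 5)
Your overall strategy matches the paper's: choose $\pi\in\fix(E)$ sliding the off-$E$ part of $\supp(s)$ away from $\supp(t)$, and invoke the Symmetry Lemma together with $\fix(E)\subseteq\sym(\dot x)$. The gap is in the one step you work out in detail, namely the verification that $\pi\tup{s,f_s}$ and $\tup{t,f_t}$ have a common extension (a step the paper leaves implicit). You have the order on the disjointing functions backwards: by clause~(2) of the definition of $\leq$ on $\QQ$, a condition $\tup{u,f_u}$ extending $\tup{t,f_t}$ must satisfy $f_u(a)\leq f_t(a)$ for every $a\subseteq\supp(t)$ --- the stronger condition makes a \emph{stronger} disjointness promise --- whereas you take $f_u\equiv N$ with $N$ strictly above every value of $f_t$ and assert that the order demands domination. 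The pair $\tup{u,f_u}$ you build is therefore not an extension of $\tup{t,f_t}$ (nor of $\pi\tup{s,f_s}$), and no contradiction is reached.

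This is also not a mere sign error fixable by writing $f_u=\min(\dots)$: once $f_u(a)\leq f_t(a)$ is imposed, you must check that $u\restriction a$ is $f_u(a)$-injective for every $a\subseteq\supp(t)$, including sets $a$ meeting both $E$ and $\supp(t)\setminus E$. On $a\cap E$ the branches of $u$ are the $s$-branches, which extend the corresponding $t$-branches to new levels, and the hypothesis $\tup{s,f_s}\leq\tup{t,f_t}\restriction E$ does not by itself rule out collisions between these new values and the untouched $t$-branches at coordinates in $a\setminus E$ above level $f_t(a)$, since the restriction to $E$ forgets every disjointness promise involving coordinates outside $E$. Your ``vacuously $N$-injective'' observation is exactly the move that sweeps this under the rug, and it is unavailable once the inequality points the right way. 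The disjointing function is the entire reason $\QQ$ is defined the way it is, so the compatibility argument has to engage with it rather than trivialize it; as written, your proof does not establish the proposition.
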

\begin{proof}
Suppose that $\tup{t',f_{t'}}\leq\tup{t,f_t}\restriction E$, then we can find a permutation in $\fix(E)$ which moves any coordinate in $\supp t\setminus E$ to a coordinate outside $\supp( t)$. Let $\pi$ be such automorphism of $\QQ$, then $\pi\dot x=\dot x$ and so $\pi\tup{t',f_{t'}}$ is compatible with $\tup{t,f_t}$. If $\tup{t,f_t}\forces\check b\in\dot x$, then no compatible condition can force otherwise. In particular, $\pi\tup{t',f_{t'}}\nforces\check b=\pi\check b\notin\pi\dot x=\dot x$, so $\tup{t',f_{t'}}$ cannot force $\check b\notin\dot x$. Therefore $\tup{t,f_t}\restriction E$ already forced $\check b\in\dot x$. The argument for $\check b\notin\dot x$ is similar, and therefore the conclusion holds.
\end{proof}
\begin{corollary}\label{cor:gnd-model-names}
Suppose that $\dot x\in\HS$ with support $E$ such that for some $a$, $\forces\dot x\subseteq\check a$. Define $\dot x_*=\{\tup{\tup{t,f_t}\restriction E,\check b}\mid\tup{t,f_t}\forces\check b\in\dot x\}$, then $\dot x_*\in\HS$ and $\forces\dot x=\dot x_*$.\qed
\end{corollary}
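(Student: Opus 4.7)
The plan is to handle the two assertions separately; both follow from the preceding proposition combined with straightforward unwinding of definitions. The real content is packaged in that proposition---the corollary is its name-level repackaging.

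For hereditary symmetry, I would check $\fix(E)\subseteq\sym(\dot x_*)$. A typical element of $\dot x_*$ is a pair $\tup{\tup{t,f_t}\restriction E,\check b}$; since $\tup{t,f_t}\restriction E$ has support contained in $E$ by construction and any $\pi\in\fix(E)$ fixes $E$ pointwise, $\pi$ fixes the condition, while $\pi\check b=\check b$ holds for every automorphism. Hence each such pair is sent to itself and $\pi\dot x_*=\dot x_*$. Hereditary symmetry is then automatic: the names $\check b$ are canonical, and the restricted conditions involve only finitely many names $\dot t_s$ together with a canonical name for a function on a finite power set, all of which are in $\HS$.

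For the equality $\forces\dot x=\dot x_*$, I would fix an $M$-generic $H\subseteq\QQ$ and show $\dot x^H=\dot x_*^H$. If $b\in\dot x_*^H$, then by the definition of $\dot x_*$ there is some $\tup{t,f_t}\forces\check b\in\dot x$ with $\tup{t,f_t}\restriction E\in H$; the preceding proposition upgrades this to $\tup{t,f_t}\restriction E\forces\check b\in\dot x$, so $b\in\dot x^H$. Conversely, if $b\in\dot x^H$, pick $p\in H$ forcing $\check b\in\dot x$. Since $p\leq p\restriction E$ in $\QQ$, upward closure of $H$ gives $p\restriction E\in H$; applying the defining schema of $\dot x_*$ with $\tup{t,f_t}:=p$ yields $\tup{p\restriction E,\check b}\in\dot x_*$, whence $b\in\dot x_*^H$.

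There is no substantive obstacle beyond the preceding proposition itself; once that is granted, the corollary is pure bookkeeping. The only routine points worth confirming are that $\tup{t,f_t}\restriction E$ is really a condition of $\QQ$ (which the restriction operation was defined to produce) and that $p$ extends $p\restriction E$ in the ordering on $\QQ$, which is immediate from the extension clauses on supports and on $f_t$ once the supports coincide.
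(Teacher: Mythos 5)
Your argument is correct and is exactly the routine verification the paper intends: the corollary is stated with no proof because it follows immediately from the preceding proposition, and your two checks (that $\fix(E)$ fixes $\dot x_*$ since the restricted conditions have support in $E$ and the $\check b$'s are canonical, and that the proposition plus $p\leq p\restriction E$ gives $\dot x^H=\dot x_*^H$) are the intended unwinding. No discrepancy with the paper's approach.
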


Say that a finite $E\subseteq\omega$ \textit{captures} a condition $\tup{t,f_t}$ if for all $\tup{\alpha,n}\in\supp( t)$, $\dom t_{\alpha,n}\subseteq E$ and $\rng f_t\subseteq E$. We say that two conditions $\tup{s,f_s}$ and $\tup{t,f_t}$ are \textit{compatible on $E$}, if for any $\tup{\alpha,i}\in\supp( t)\cap\supp( s)$ and $n\in E$, $s_{\alpha,i}\restriction n=t_{\alpha,i}\restriction n$.

\begin{lemma}\label{lem:upwards-homogeneity}
Let $\tup{s,f_s}$ be a condition in $\QQ$ and let $n<\omega$ be such that $n$ captures $\tup{s,f_s}$. For any $q,q'\leq\tup{s,f_s}$ with $\supp(q)=\supp(q')=\supp( t)$, if $q$ and $q'$ are compatible on $n$, then there is an automorphism of $\PP$ in $\fix_\sG(n\times\{0\}\times\{0\})$ which does not move the canonical name of $\tup{t,f_t}$, and $\forces_\PP\dot q\compatible_\QQ\pi\dot q'$, where $\dot q$ and $\dot q'$ are the canonical names for $q$ and $q'$ respectively.
\end{lemma}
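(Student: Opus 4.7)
The plan is to construct $\pi$ explicitly, exploiting the freedom in $\fix_\sG(n\times\{0\}\times\{0\})$ at levels $\geq n$ to transport the branches of $\dot q'$ onto those of $\dot q$. First, I observe that the requirement that $\pi$ fix the canonical name of $\tup{s,f_s}$ is automatic from $\pi\in\fix_\sG(n\times\{0\}\times\{0\})$: such $\pi$ satisfies $\pi_i^*=\id$ for every $i<n$, so $\pi$ fixes each $\dot a_{i,j}$ with $i<n$; since $n$ captures $\tup{s,f_s}$, the tree-part of the canonical name is built from $\dot t_\sigma$'s with $\dom\sigma\subseteq n$, each involving only such $\dot a_{i,j}$'s, and $f_s$ is a ground-model combinatorial object whose check-name is invariant under $\sG$.

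Two structural observations drive the argument. Because $\rng f_q,\rng f_{q'}\subseteq\rng f_s\subseteq n$, both $q$ and $q'$ are injective at every level $\geq n$ on each $a\subseteq\supp(s)$. Because $q$ and $q'$ are compatible on $n$, they agree at every level $<n$ on their common support $\supp(s)=\supp(q)=\supp(q')$, so their full collision patterns on $\supp(s)$ coincide.

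To define $\pi$, set $\pi_i^*=\id$ and let the $\kappa$-permutation in the $(i,0)$-block fix $0$, for every $i<n$; the remaining components may be chosen arbitrarily. For each $i\geq n$, let
\[D_i=\sett{(\alpha,m)\in\supp(s)\midd i<\min(\ell_q(\alpha,m),\ell_{q'}(\alpha,m))}.\]
Writing $c_q(\alpha,m,i),c_{q'}(\alpha,m,i)$ for the $\omega$-indices of the level-$i$ choices of $q$ and $q'$, the assignment $c_{q'}(\alpha,m,i)\mapsto c_q(\alpha,m,i)$ on $D_i$ is a partial bijection between finite subsets of $\omega$, by the level-$i$ injectivity above. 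I extend it to a bijection $\pi_i^*$ of $\omega$ so that its image on $\sett{c_{q'}(\alpha,m,i)\midd\ell_{q'}(\alpha,m)>i\geq\ell_q(\alpha,m)}$ is disjoint from $\sett{c_q(\alpha,m,i)\midd\ell_q(\alpha,m)>i\geq\ell_{q'}(\alpha,m)}$; this is a finite constraint on an infinite set, hence possible.

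Finally, I exhibit the common extension $q''$ with $\supp(q'')=\supp(s)$, branches taken at each coordinate as the longer of the corresponding branches of $q$ and $\pi q'$, and $f_{q''}(a)=\min(f_q(a),f_{q'}(a))$. Comparability of each pair of branches holds by compatibility on $n$ below $n$ and by the definition of $\pi_i^*$ on $D_i$ above. The $f_{q''}(a)$-injectivity of $q''\restriction a$ splits into the levels $<n$, where $q''$ agrees with $q$ on $\supp(s)$ and so inherits its injectivity, and the levels $\geq n$, where the injectivity of $q$ and $q'$, the bijectivity of $\pi_i^*$, and the avoidance clause imposed in its extension together rule out both internal and mixed collisions among the merged values. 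I expect the main obstacle in writing a careful proof to be the bookkeeping around these disjointing-function constraints; conceptually, once $\pi$ is correctly defined the verification is routine.
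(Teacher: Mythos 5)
Your proof is correct and follows essentially the same route as the paper's: use the fact that $n$ captures $\tup{s,f_s}$ (so the disjointing functions of $q,q'$ are bounded by $n$, forcing level-wise injectivity at levels $\geq n$) to read off, at each level $i\geq n$, a partial bijection of $\omega$-indices sending the choices of $q'$ to those of $q$, and extend it to a permutation $\pi\in\sG$ that is trivial below level $n$ and hence fixes the canonical name of $\tup{s,f_s}$. If anything, you are more careful than the paper at the one delicate point — branches of unequal length — where the paper merely asserts that ``the rest will not interfere with compatibility,'' while your explicit avoidance clause in the extension of the partial bijection (together with the explicit common extension $q''$ and its disjointing function $\min(f_q,f_{q'})$) is genuinely needed to rule out mixed collisions at levels $\geq n$.
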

This statement is complicated to state and explain in words, and we encourage the reader to get a pen and a piece of paper and try to draw the scenario in the assumptions of the lemma regarding $\tup{s,f_s}$ and $q,q'$. The rest of the paper can wait. It might also be fruitful to skip the proof of this lemma on first reading.
\begin{proof}
Let $q=\tup{t,f_t}$ and $q'=\tup{t',f_{t'}}$. Without loss of generality we may assume that for all $\tup{\alpha,i}\in\supp(q)=\supp(q')$, $\dom t_{\alpha,i}=\dom t'_{\alpha,i}$.

Fix some $E$ which captures both $q$ and $q'$. For simplicity, assume that $E=n+1$. The general proof will then be the result of a recursive application of the simplified proof.

Let $ c$ denote the sequence $c_{\alpha,i}$ such that for $\tup{\alpha,i}\in\supp t$, $t_{\alpha,i}(n)=c_{\alpha,i}$. Similarly, define $ c'$. Since $n$ captures $\tup{t,f_t}$, it has to be the case that \[| c|=|\{\tup{\alpha,i}\in\supp( t)\mid n\in\dom t_{\alpha,i}\}|=| c'|.\]

Let $m(\alpha,i)$ be such that $\dot a_{k-1,m(\alpha,i)}$ is the canonical name for $c_{\alpha,i}$, and similarly define $m'(\alpha,i)$. Now consider $\sigma$ to be the permutation of $\omega$ which maps $m'(\alpha,i)$ to $m(\alpha,i)$. And define $\pi\in\sG$ to be the permutation for which $\pi_n^*=\sigma$, and $\pi_k$ to be the identity for $k\neq n$.\footnote{Note that we do not restrict $\pi_{n,m}$ for any $m$, as it has no effects on these names.}

It is easy to see why such $\pi$ preserves the canonical name of $\tup{s,f_s}$, and why $\pi\dot q'$ is compatible with $\pi q$. Indeed, $\pi\dot q'=\dot q$. We can now apply this argument recursively to obtain the wanted consequence without assuming $E=n+1$. Finally, removing the first assumption that $\dom t_{\alpha,i}=\dom t'_{\alpha,i}$ for all $\tup{\alpha,i}\in\supp(q)$, we simply obtain $\sigma$ for the common parts. The rest will not interfere with compatibility.
\end{proof}

\begin{proposition}
Suppose that $\dot x$ is a name in $\HS$ such that $t\forces\dot x\subseteq\check V$. Then there is some $t'$ which extends $t$, such that for some $u\in V$, $t'\forces\dot x=\check u$.
\end{proposition}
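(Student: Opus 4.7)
The idea is to extend $t$ to a ``uniform'' condition $t'$ all of whose branches have some common length $n$, and then argue by contradiction that no two extensions of $t'$ can force contradictory statements about membership in $\dot x$. Setting $u = \{b : t' \forces^\HS \check b \in \dot x\}$, this will give $t' \forces \dot x = \check u$ with $u \in V$. The subtle point is to bridge two layers of symmetry: the $\QQ$-symmetry $\sH$ under which $\dot x$ is built inside $M$, and the $\PP$-symmetry $\sG$ underlying Lemma~\ref{lem:upwards-homogeneity} that merges $\QQ$-conditions --- the same $\PP$-automorphism must simultaneously realize the compatibility of two $\QQ$-conditions and lie in the symmetry group of the canonical $\PP$-name $\ddot x$ of $\dot x$.

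First I would apply Corollary~\ref{cor:gnd-model-names} to replace $\dot x$ by $\dot x_*$, whose mentioned conditions all have $\QQ$-support contained in a fixed finite $E \subseteq \kappa \times \omega$. Since $\dot x \in M$, it has a canonical $\PP$-name $\ddot x \in \HS$ for the first symmetric system, with some finite $\PP$-support $E^* \subseteq \omega \times \omega \times \kappa$. I then choose $n < \omega$ strictly larger than every first coordinate occurring in $E^*$, every value of $f_t$, and every $\dom t_{\alpha,i}$ for $\tup{\alpha,i} \in \supp t$, and extend $t$ to $t' = \tup{s, f_s}$ with $\supp s \supseteq E$, $\dom s_{\alpha,i} = n$ for every $\tup{\alpha,i} \in \supp s$, and $\rng f_s \subseteq n$; then $n$ captures $t'$.

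Defining $u$ as above makes the inclusion $t' \forces \check u \subseteq \dot x$ tautological. For the reverse, suppose toward a contradiction that $q \leq t'$ forces $\check b \in \dot x$ for some $b \notin u$, so that some $q' \leq t'$ forces $\check b \notin \dot x$. By extending each condition at new coordinates using the other condition's branches, we arrange $\supp q = \supp q'$. Because $n$ captures $t'$ and $q, q' \leq t'$, for every $\tup{\alpha,i}$ in the common support we have $q_{\alpha,i} \restriction n = s_{\alpha,i} = q'_{\alpha,i} \restriction n$; hence $q$ and $q'$ are compatible on $n$. Lemma~\ref{lem:upwards-homogeneity} then yields $\pi \in \fix_\sG(n \times \{0\} \times \{0\})$ preserving the canonical $\PP$-name of $t'$ with $\forces_\PP \dot q \compatible_\QQ \pi \dot q'$.

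The $\pi$ produced by the lemma's proof acts only on the second coordinate at level $n$, so by our choice of $n$ we have $\pi \in \fix_\sG(E^*) \subseteq \sym_\sG(\ddot x)$, i.e.\ $\pi \ddot x = \ddot x$; the lifted automorphism of the iteration $\PP * \dot\QQ$ therefore preserves $\dot x$. Invoking the symmetry lemma at the level of the iteration, $\pi \dot q'$ forces $\check b \notin \dot x$, while $\dot q$ forces $\check b \in \dot x$ and the two are forced to be compatible in $\QQ$; a common extension would then force both $\check b \in \dot x$ and $\check b \notin \dot x$, the sought contradiction. The principal obstacle is exactly this coordination between the two symmetry groups, and it is what drives the seemingly arbitrary choice of $n$ above the first coordinates of $E^*$.
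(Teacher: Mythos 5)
Your strategy is the same as the paper's (reduce via \autoref{cor:gnd-model-names}, flatten $\dot x$ to a $\PP\ast\dot\QQ$-name with finite $\PP$-support $E^*$, extend $t$ to a condition captured by some $n$ beyond $E^*$, and merge two contradictory extensions with \autoref{lem:upwards-homogeneity}), but there is a genuine gap at the final step. The statements ``$q\forces\check b\in\dot x$'' and ``$q'\forces\check b\notin\dot x$'' are statements of $M$, so in the iteration they are witnessed by conditions $\tup{p,\dot q}$ and $\tup{p,\dot q'}$ for some $p$ in the $\PP$-generic, not by $\dot q$ and $\dot q'$ alone. After applying $\pi$ you obtain $\tup{\pi p,\pi\dot q'}\forces\check b\notin[\dot x]$, and the contradiction requires a common extension of $\tup{p,\dot q}$ and $\tup{\pi p,\pi\dot q'}$ in $\PP\ast\dot\QQ$; the lemma guarantees $\forces_\PP\dot q\compatible\pi\dot q'$, but nothing you have arranged guarantees $p\compatible\pi p$ in $\PP$. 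Since $\pi$ permutes the second coordinate at levels $\geq n$, it can easily send $p$ to a condition incompatible with $p$, and then no contradiction follows. Your choice of $n$ above the first coordinates of $E^*$ handles only the requirement $\pi\in\sym_\sG(\ddot x)$, which is the half of the ``coordination'' you flag; the other half is this compatibility of the $\PP$-parts. The paper repairs it by further specifying the (otherwise unconstrained) third-coordinate permutations $\pi_{k,m}$ so that they move $\dom p(k,m,\cdot)$ to a disjoint interval, exactly as in the proof of \autoref{prop:countable-union-is-uncountable}; this keeps $\pi$ inside $\fix(E^*)$ and does not disturb the conclusion of the lemma, but it must be said.

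A smaller point: your justification that $q$ and $q'$ are compatible on $n$ only covers coordinates in $\supp t'$, where both branches restrict to $s_{\alpha,i}$. After you equalize supports, a coordinate in $\supp q\cap\supp q'\setminus\supp t'$ may carry genuinely different branches below level $n$ in the two conditions, and then the hypothesis of \autoref{lem:upwards-homogeneity} fails. This is fixable with a tool you already invoke: first replace $q$ and $q'$ by their restrictions to the finite $\QQ$-support $E\subseteq\supp t'$ of $\dot x$, which decide the same statements by the support-restriction proposition preceding \autoref{cor:gnd-model-names}, and only then apply the merging lemma below $t'\restriction E$.
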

\begin{proof}
By \autoref{cor:gnd-model-names} we may assume that $\dot x=\dot x_*$, in the notation of that corollary, in particular every name which appears in $\dot x$ has the form $\check y$ for some $y\in V$. Let $[\dot x]$ denote the $\PP\ast\QQ$-name given by $\{\tup{\tup{p,\dot q},\check y}\mid p\forces\tup{\dot q,\check y}\in\dot x\}$.\footnote{For $\check y$ we ignore the distinction of what forcing is being used, if $\varnothing$ is assumed to be the maximum condition of all notions, then it is truly the same object when $y\in V$.}

Namely, $[\dot x]$ is the translation of $\dot x$ from a $\QQ$-name in the symmetric extension given by $\PP$, to a $\PP\ast\dot\QQ$-name. We may assume that all the finite branches in $\dot q$ have canonical form, and that the disjointing function also has a canonical name defined similar to the names $\dot t_s$. Let $E$ be a support for $[\dot x]_\PP$, the $\PP$-name for $\dot x$ in $\HS_\sF$.

Suppose that $p\forces_\PP\dot q\forces_{\dot\QQ}\check y\in[\dot x]_\PP$, which is the same as saying that $\tup{p,\dot q}\forces\check y\in[\dot x]$. By applying \autoref{lem:upwards-homogeneity}, we get that we may restrict $\dot q$ such that for all $\dot t_s$ which are in the support of $\dot q$, we can restrict $s$ to $\max E$. Namely, any condition which is compatible with $\dot q$ on $E$ must force the same.

This is enough, modulo one minor problem, that when we apply $\pi$ obtained by \autoref{lem:upwards-homogeneity}, it might be that $\pi p$ is incompatible with $p$. To remedy that, we simply add the following condition to $\pi$: for all $m<\omega$, $\pi_{n,m}$ moves the domain of $p(n,m,\cdot)$ to a disjoint interval, similar to the way we defined this in the end of the proof of \autoref{prop:countable-union-is-uncountable}. This does not affect the consequence of the lemma, and ensures that $p$ is compatible with $\pi p$.

Suppose now that $q=\tup{t,f_t}$ is a condition such that $\dom t_{\alpha,i}=\max E$ for all $\tup{\alpha,i}\in\supp( t)$, then we can define the $\PP$-name $\dot u_q=\{\tup{p,\check y}\mid p\forces^\HS_\PP\dot q\forces_{\dot\QQ}\check y\in[\dot x]_\PP\}$. This name is symmetric since $E$ is a support for it. And in $M$, $q\forces\dot x=\check u_q$.
\end{proof}
\begin{corollary}\label{cor:dist}
If $\alpha$ is such $|V_\alpha|<\kappa$, then no new sets are added of rank $\alpha$ when taking a symmetric extension with $\tup{\QQ,\sH,\sK}$ over $M$.\qed
\end{corollary}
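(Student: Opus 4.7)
I would derive the corollary by an induction on $\alpha$, establishing at each stage both
(i)~$V_\alpha^N = V_\alpha^M$ and (ii)~$\mathcal{P}^N(V_\alpha^M)\subseteq M$, where $N$ denotes the symmetric extension of $M$ by $\tup{\QQ,\sH,\sK}$. The corollary on rank-$\alpha$ sets then follows at once: a rank-$\alpha$ set in $N$ is a subset of $V_\alpha^N = V_\alpha^M$ by (i), and so lies in $M$ by (ii).

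Fix $\alpha$ with $|V_\alpha^M|<\kappa$. Since $V_\beta^M\subseteq V_\alpha^M$ for each $\beta<\alpha$, the same cardinality bound holds at every smaller rank, and the inductive hypothesis supplies both (i) and (ii) there. For (i) at $\alpha$, let $y\in V_\alpha^N$ and set $\beta=\rank(y)<\alpha$ (rank being absolute). Then $y\subseteq V_\beta^N=V_\beta^M$ by (i) at $\beta$, and $y\in M$ by (ii) at $\beta$; in fact $y\in V_{\beta+1}^M\subseteq V_\alpha^M$, as required.

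For (ii) at $\alpha$, take any $y\in N$ with $y\subseteq V_\alpha^M$ and any name $\dot y\in\HS$ for $y$. Because $V_\alpha^M\subseteq M$, the name $\dot y$ is forced to be a subset of $\check V$ in the sense of the preceding proposition. That proposition then yields, densely below any given condition, extensions $t'$ and sets $u\in M$ with $t'\forces\dot y=\check u$, so at the generic filter $y=\dot y^G\in M$. I do not anticipate any real obstacle: the substantive work is in the preceding proposition, and this corollary is simply a rank-by-rank induction applying it, with the hypothesis $|V_\alpha^M|<\kappa$ used only to guarantee that the same hypothesis is available at every earlier stage.
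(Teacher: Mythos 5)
Your induction scheme---proving $V_\alpha^N=V_\alpha^M$ and $\mathcal P^N(V_\alpha^M)\subseteq M$ simultaneously by recursion on $\alpha$---is the right skeleton, and it is essentially what the paper leaves implicit behind the \qedsymbol. The gap is in step (ii), where you invoke the preceding proposition: you read its hypothesis ``$t\forces\dot x\subseteq\check V$'' as saying that every element of $\dot x$ lies in $M$, the ground model of the forcing $\QQ$. But $V$ there denotes the \emph{original} ground model, over which $\PP$ was forced: the proof of that proposition passes to the two-step name $[\dot x]$ and applies automorphisms of $\PP$, and these fix $\check y$ only when $y\in V$ (the footnote in that proof says as much; for $y=a_{n,m}\in M\setminus V$ the translated name is $\dot a_{n,m}$, which the automorphisms move). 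Under your reading the proposition would be false: each $\dot B_{\alpha,n}$ is a hereditarily symmetric name forced to be a subset of $\check A$ with $A\in M$, yet $B_{\alpha,n}\notin M$ since $T$ has no branches in $M$. Indeed your argument, were it correct, would show that $\QQ$ adds no sets whatsoever---you yourself note that the hypothesis $|V_\alpha|<\kappa$ is ``used only to guarantee that the same hypothesis is available at every earlier stage,'' which is to say it is not used at all; an argument that proves too much must contain an error, and this is it.

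That hypothesis is exactly what bridges the gap. Since $\PP=\Add(\kappa,\omega\times\omega\times\kappa)$ is closed under descending sequences of length ${<}\kappa$ and $V\subseteq M\subseteq V[G]$, a rank induction shows that $|V_\alpha|<\kappa$ implies $V_\alpha^{V[G]}=V_\alpha^M=V_\alpha^V$. Hence a rank-$\alpha$ set of $N$ is, by your part (i), a subset of $V_\alpha^V$, a set of the \emph{original} ground model all of whose members are genuine $V$-check names; this is precisely the situation the proposition handles, and it yields a dense set of conditions forcing $\dot x=\check u$ for sets $u$ in $M$ (the $\dot u_q$ of its proof), whence $x\in M$. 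The repair is therefore local: insert the equality $V_\alpha^M=V_\alpha^V$---this is where $|V_\alpha|<\kappa$ does its real work---before appealing to the proposition.
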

\section{Morris' theorem}
\begin{theorem}[Morris' theorem]
It is consistent that for every $\alpha$ there exists a set $A_\alpha$ which is the countable union of countable sets, and $\mathcal P(A_\alpha)$ can be mapped onto $\alpha$.
\end{theorem}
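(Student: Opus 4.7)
The plan is to amalgamate the local construction of Section \ref{sect:local} into a single class-length finite support iteration of symmetric extensions, and to deduce $\ZF$ in the limit from the preservation theorem quoted in the preliminaries. Starting from a ground model of $\ZFC+\GCH$ (so that $\kappa^{<\kappa}=\kappa$ for every regular $\kappa$), fix a strictly increasing sequence of regular cardinals $\tup{\kappa_\alpha\mid\alpha\in\Ord}$ with $\kappa_\alpha\geq\alpha$; the choice $\kappa_\alpha=\aleph_{\alpha+1}$ suffices. Define an iteration $\tup{\QQ_\alpha,\sG_\alpha,\sF_\alpha\mid\alpha\in\Ord}$ in which the $\alpha$th iterand is the two-step symmetric system of Section \ref{sect:local} run with parameter $\kappa_\alpha$, interpreted in the symmetric model constructed through the previous stages.

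By the local analysis, the symmetric extension obtained at stage $\alpha$ contains a set $A_{\kappa_\alpha}$ which is a countable union of countable sets together with a surjection of $\mathcal P(A_{\kappa_\alpha})$ onto $\kappa_\alpha$, and hence onto $\alpha$. To see these witnesses persist into the class-length symmetric model $N$, recall that the first half $\PP_\alpha=\Add(\kappa_\alpha,\omega\times\omega\times\kappa_\alpha)$ of the $\alpha$th iterand is $<\kappa_\alpha$-closed and adds no sets of rank $<\kappa_\alpha$, and that by \autoref{cor:dist} the second half $\QQ_\alpha$ adds no sets of rank $\eta$ whenever $|V_\eta|<\kappa_\alpha$. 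Consequently, for each fixed $\eta$ there is a stage $\alpha^*$ beyond which no iterand contributes any new set of rank $\leq\eta$. Together with the homogeneity of each symmetric system (inherited from the wreath-product structure of $\sG_\alpha$ on the Cohen side and the corresponding block-permutation action of $\sH_\alpha$ on the branch side, both of which clearly witness homogeneity), this fulfils the hypothesis of the preservation theorem. Hence $N\models\ZF$, and for each $\alpha$ the computation $\mathcal P(A_{\kappa_\alpha})^N=\mathcal P(A_{\kappa_\alpha})^{M_{\alpha+1}}$ ensures that the stagewise surjection onto $\alpha$ remains a surjection in $N$.

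The principal obstacle—and the entire raison d'être of the disjointing-function apparatus in Section \ref{sect:local}—is precisely the verification that $\QQ_\alpha$ does not add new sets of low rank, which would otherwise break the preservation hypothesis and even collapse $\kappa_\alpha$. Granted \autoref{cor:dist}, the rest of the argument is essentially bookkeeping: confirm homogeneity of each iterand in its intermediate model, confirm that $|V_\eta|<\kappa_\alpha$ eventually holds for every $\eta$, apply the preservation theorem, and read off the witnesses $A_{\kappa_\alpha}$ and their power-set surjections in $N$ to complete the proof of Morris' theorem.
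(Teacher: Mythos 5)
Your overall strategy---amalgamate the local construction into a class-length finite-support symmetric iteration, verify that for each $\eta$ the iterands eventually add no sets of rank $\eta$, and invoke the preservation theorem---is the same as the paper's. But there is a genuine gap in the key step. You define the $\alpha$th iterand as the two-step system of Section~\ref{sect:local} \emph{interpreted in the intermediate symmetric model} $M_\alpha$, and then you justify ``stage $\alpha$ adds no sets of rank $\eta$ with $|V_\eta|<\kappa_\alpha$'' by appealing to the $<\kappa_\alpha$-closure of $\Add(\kappa_\alpha,\omega\times\omega\times\kappa_\alpha)$ and to \autoref{cor:dist}. Both of those facts were established over the choice-satisfying ground model $V$ (respectively over $M$, a symmetric extension of $V$ by a single step): the closure argument for not adding low-rank sets uses a form of dependent choice to build the deciding descending sequence, and the entire analysis behind \autoref{cor:dist} (the homogeneity lemma, the automorphisms of $\PP$, the canonical-name computations) is carried out in $V$. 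The intermediate model $M_\alpha$ of a finite-support class iteration of non-c.c.c.\ forcings fails choice badly, and finite support destroys closure, so neither fact transfers to $M_\alpha$ without further argument. Since this is exactly the hypothesis the preservation theorem needs, the proof does not close as written.

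The paper's resolution is to define every iterand $\QQ_\alpha=\QQ_{\alpha,0}\ast\dot\QQ_{\alpha,1}$ in $V$ itself, so that the ``iteration'' is in fact a finite-support \emph{product} of two-step iterations. One may then reorder the generics, forcing with the $\alpha$th factor over $V$ first (where the closure of $\QQ_{\alpha,0}$ and \autoref{cor:dist} genuinely apply) and absorbing the remaining factors afterwards; this is the content of the Lemma inside the paper's proof. If you adjust your setup so the iterands are $V$-definable and make the product/reordering argument explicit, the rest of your bookkeeping (homogeneity of each factor, $|V_\eta|<\kappa_\alpha$ for all large $\alpha$, persistence of $A_{\kappa_\alpha}$ and its power-set surjection into the final model) goes through as you describe.
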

\begin{proof}

Assume that $V\models\ZFC+\GCH$. For every $\alpha$, let $\tup{\QQ_{\alpha,0},\sG_\alpha,\sF_\alpha}$ denote the first symmetric system described in the previous section as defined in $V$ where $\kappa=\omega_{\alpha+1}$, and let $\tup{\dot\QQ_{\alpha,1},\dot\sH_\alpha,\dot\sK_\alpha}$ denote the symmetric system described in the second section (here $\QQ_{\alpha,1}$ is composed of the canonical names of the conditions, etc.), as defined in the symmetric extension of $V$ given by the first step. We now define a symmetric iteration over the class of ordinals, where at each step we force with $\QQ_{\alpha,0}$ and then with $\QQ_{\alpha,1}$. We denote by $\QQ_\alpha$ the iteration $\QQ_{\alpha,0}\ast\dot\QQ_{\alpha,1}$.

Let $\PP_\alpha$ the iteration of the first $\alpha$ steps, and $\PP=\PP_\Ord$. 

\begin{lemma}
For all $\alpha$, $\forces^\IS_\alpha\QQ_\alpha$ adds no sets of rank $\eta$ such that $|V_\eta|<\aleph_{\alpha+1}$.
\end{lemma}
\begin{proof}
Since $\QQ_\alpha$ is in fact defined in $V$, the iteration is in fact a product of two-step iterations. This means that we can change the order in which we add the generics, in particular, by \autoref{cor:dist}, $\QQ_\alpha$ does not add sets of rank $\eta$ to the ground model. Therefore the conclusion follows.
\end{proof}

Since each iterand is weakly homogeneous, we can apply the preservation theorem and obtain a model in which for each $\alpha$ there is a set $A_\alpha$ which is the countable union of countable sets, and $\mathcal P(A_\alpha)$ can be mapped onto $\omega_\alpha$ (in fact, onto $\omega_{\alpha+1}$).
\end{proof}

We also derive the corollary given by Morris himself in his announcement.
\begin{corollary}
Given a model of $\ZFC$, it has an extension satisfying $\ZF$ which cannot be extended further to a model of $\ZFC$ without adding ordinals.
\end{corollary}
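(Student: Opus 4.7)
Let $M \models \ZFC$ and let $N$ be the extension produced by the previous theorem. I would argue by contradiction: suppose $N \subseteq N'$, $N' \models \ZFC$, and $\Ord^N = \Ord^{N'}$. The plan is to use the sequence $\langle A_\alpha \mid \alpha \in \Ord\rangle$ of sets from Morris' theorem to build an injection of the ordinals of $N'$ into a fixed ordinal of $N'$, contradicting Cantor.

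First I would verify that for every $\alpha$, $A_\alpha$ remains a countable union of countable sets in $N'$. This is immediate upward absoluteness: the $N$-witness is a function $g_\alpha \in N \subseteq N'$ with $\dom g_\alpha = \omega$, each $g_\alpha(n)$ countable in $N$ via a bijection in $N$ (hence in $N'$), and $A_\alpha = \bigcup_{n<\omega} g_\alpha(n)$. Since $N' \models \ZFC$, the countable union theorem applies in $N'$, so $A_\alpha$ is countable in $N'$ and therefore $|\mathcal P(A_\alpha)|^{N'} = 2^{\aleph_0}$ as computed in $N'$.

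Next, the surjection $f_\alpha \colon \mathcal P(A_\alpha) \twoheadrightarrow \aleph_\alpha^N$ given by Morris' theorem is an element of $N$, hence of $N'$. Therefore $\aleph_\alpha^N$, viewed as an ordinal of $N'$, is the surjective image of a set of size $2^{\aleph_0}$ in $N'$; by choice in $N'$ this yields $\aleph_\alpha^N \leq (2^{\aleph_0})^{N'}$ as ordinals. Since $\Ord^N = \Ord^{N'}$, this inequality holds for every $\alpha \in \Ord^{N'}$.

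Finally, fixing $\lambda = (2^{\aleph_0})^{N'}$, the map $\alpha \mapsto \aleph_\alpha^N$ embeds $\Ord^{N'}$ into $\lambda + 1$, which is absurd. The one point requiring care is the upward absoluteness of ``countable union of countable sets''; assuming the hypothesis $\Ord^N = \Ord^{N'}$ and the fact that $f_\alpha$ is a genuine surjection in $N$, everything else is a direct application of Cantor's theorem in $N'$.
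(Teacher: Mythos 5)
Your proposal is correct and follows essentially the same route as the paper: in the outer $\ZFC$ model each $A_\alpha$ is countable by the countable union theorem, so $\mathcal P(A_\alpha)$ has size $2^{\aleph_0}$ and the surjection onto $\aleph_\alpha^N$ bounds the cardinality of every ordinal of $N$, contradicting $\Ord^N=\Ord^{N'}$. One small slip: a surjection from a set of size $\lambda=(2^{\aleph_0})^{N'}$ onto $\aleph_\alpha^N$ gives $|\aleph_\alpha^N|^{N'}\leq\lambda$, i.e.\ $\aleph_\alpha^N<(\lambda^+)^{N'}$, not $\aleph_\alpha^N\leq\lambda$ as ordinals, so the strictly increasing class map $\alpha\mapsto\aleph_\alpha^N$ lands in $(\lambda^+)^{N'}$ rather than $\lambda+1$ --- the contradiction is unaffected.
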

\begin{proof}
Given a model of $\ZFC$, extend it to a model as in Morris' theorem, $M$.\footnote{If $\GCH$ does not hold in $M$ the extension might collapse cardinals. This can be overcome by introducing ``gaps'' between nontrivial iterands so that enough cardinals survive the construction.} If $M\subseteq N$ and $N\models\ZFC$, then each $A_\alpha$ is countable, and therefore $\mathcal P(A_\alpha)$ is of size $2^{\aleph_0}$. In particular, all the ordinals of $M$ have size at most $2^{\aleph_0}$.
\end{proof}

It is worth noting there are currently three known models which satisfy this corollary: Gitik's model from \cite{Gitik:1980}, where all limit ordinals have countable cofinality and very large cardinals are necessary; Fernengel--Koepke models from \cite{Fernengel-Koepke:2016}, where no large cardinals are used, and the Singular Cardinals Hypothesis is violated in a certain way that ensures that extending the model of a model of $\ZFC$ will collapse all cardinals to become countable; and the Morris model, as described here and in the original proof.

Seemingly interesting, the construction of the Morris' model seem to mainly violate the power set axiom. Indeed all cardinals are collapsed by the very nature of the finite support product of non-c.c.c.\ partial orders. However, this might be salvageable by introducing a sort of mixed support where we take an Easton product of symmetric extensions first, and a finite support product of the second-iterands in that model.\footnote{This is similar to the use of symmetric iterations in \cite{Karagila:2018a}.} In such situation, the power set axiom will necessarily be violated.

\subsection{Morris' original proof}\label{sec:original}
Morris' original proof is a marvel of creation, considering the fact it was written in 1970, before unramified forcing was popularized by Shoenfield, and before treatises on symmetric extensions or on iterated forcing were written. Let alone any framework for iterating symmetric extensions.

Morris' proof goes along the following lines: add Cohen reals to create a set $C_0$ which is a countable union of countable sets of Cohen reals, and add subsets to its power set as described above; next add Cohen subsets to $C_0$ in the same fashion, to create $C_1$ which is a countable union of countable sets of Cohen subsets of $C_0$, and add subsets to its power set (this time mapping it onto $\omega_1$); and so on. At limit steps branches are introduced guided by ground model functions from $\alpha$ to $\omega$ and partitioned based on equality modulo a canonically chosen ultrafilter on $\alpha$ (in the case of $L$, the least constructible one).

This definition guarantees that the result is a c.c.c.\ forcing, which therefore does not change cofinalities. It is, however, externally equivalent to adding a proper class of Cohen reals, which therefore violates the axiom of power set. By defining the intermediate model carefully, though, the only subsets of $\omega$ which are added are those added in the first step.

It might be worth formulating Morris' work into the framework developed by the author in his Ph.D.\ thesis, and it seems that Morris also produced the first example of a model of $\ZF$ where Kinna--Wagner principles fail completely, or so we conjecture based on the construction (see \cite[\S10]{Karagila:2016} or \cite[\S5]{Karagila:2018b} for a full discussion on Kinna--Wagner principles and related results).

This is in contrast to the proof we present here, where the key idea is to create a localized failure and create a global failure by gluing together the local versions. While this new proof is perhaps simpler and more straightforward, it does come at the cost of requiring $\GCH$ (or allowing cardinals to collapse). Not a terrible price to pay, overall, but it is a technicality which is necessary to deal with. The other obvious advantage of a simpler proof is given in the next section: It is easier to generalize.

\section{Accommodating Dependent Choice}
The proof we gave to Morris' theorem relies heavily on the preservation theorem, which is given in the context of iterations with finite support. This makes the preservation of Dependent Choice quite impossible to achieve in nontrivial situations. The main difference of our construction from Morris' original proof is that ours is a product of localized versions, whereas Morris does what seems to be a proper iteration. If one wants to accommodate $\DC_{<\kappa}$, then one needs to move from finite support to $\kappa$-support iterations. Whereas a theory of iterations of symmetric extensions which are not finitely supported is nowhere near the horizon, we can prove a preservation theorem which captures this instance (and actually much more).

\begin{theorem}\label{thm:preservation}
Suppose that $\tup{\QQ_\alpha,\sG_\alpha,\sF_\alpha\mid\alpha<\delta}$ are strongly homogeneous symmetric systems such that $\QQ_\alpha$ is $\kappa$-closed and $\sF_\alpha$ is $\kappa$-complete. Assume that for all $\alpha<\delta$, in the symmetric extension given by the $\kappa$-support product $\prod_{\xi<\alpha}\QQ_\xi$, the symmetric system $\tup{\QQ_\alpha,\sG_\alpha,\sF_\alpha}$ does not add any sets of rank $<\eta$. Then the $\kappa$-support product of $\prod_{\alpha<\delta}\QQ_\alpha$ does not add any sets of rank $<\eta$.
\end{theorem}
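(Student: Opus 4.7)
The plan is to proceed by induction on $\delta$, after first defining the natural product symmetric system. Let $\sG_\delta = \prod_{\alpha<\delta}\sG_\alpha$ act coordinatewise on $\PP_\delta=\prod_{\alpha<\delta}\QQ_\alpha$, and let $\sF_\delta$ be the normal filter generated by subgroups of the form $\{\pi\in\sG_\delta \mid \pi_\alpha\in K_\alpha\text{ for all }\alpha\in S\}$ with $|S|<\kappa$ and $K_\alpha\in\sF_\alpha$. Because each $\sF_\alpha$ is $\kappa$-complete and each $\QQ_\alpha$ is $\kappa$-closed, $\sF_\delta$ is $\kappa$-complete and $\PP_\delta$ is $\kappa$-closed, and strong homogeneity of the factors lifts coordinatewise to a strongly homogeneous system on the product.

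For the successor step $\delta=\gamma+1$ I would factor $\PP_\delta\cong\PP_\gamma\times\QQ_\gamma$ compatibly with the symmetric structures. By the inductive hypothesis the first factor adds no sets of rank $<\eta$; by the theorem's hypothesis at $\gamma$, neither does the second when taken over the symmetric extension of $\PP_\gamma$. Composing these gives the result for $\delta$.

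The heart of the limit case is a support-reduction lemma in the style of \autoref{cor:gnd-model-names}: if $\dot x\in\HS_{\sF_\delta}$ is forced to have rank $<\eta$ and its symmetry group contains the subgroup pointwise-fixing the coordinates in some $S\subseteq\delta$ with $|S|<\kappa$, then there is an equivalent name $\dot x'$ using only coordinates in $S$. Given a condition deciding $\check b\in\dot x$, I would restrict it to $S$ and use strong homogeneity at the coordinates in $\delta\setminus S$, together with an analogue of \autoref{lem:upwards-homogeneity}, to push two extensions of the restriction into compatibility inside $\PP_\delta$; the hereditary part is handled by recursion on the rank of names appearing in $\dot x$, which terminates by the rank bound. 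When $\cf(\delta)\geq\kappa$ the support $S$ is bounded in $\delta$ by some $\alpha<\delta$, so $\dot x'$ is already an $\HS$-name in the symmetric extension of $\PP_\alpha$, and the inductive hypothesis finishes the job.

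The main obstacle I anticipate is the subcase $\cf(\delta)<\kappa$, where a support of size $<\kappa$ may still be cofinal. My plan is to fix a continuous strictly increasing cofinal sequence $\tup{\delta_\gamma\mid\gamma<\cf(\delta)}$, reindex $\PP_\delta$ as the $\kappa$-support product of the blocks $\RR_\gamma=\prod_{\delta_\gamma\leq\alpha<\delta_{\gamma+1}}\QQ_\alpha$, and apply the induction at the smaller ordinal $\delta_{\gamma+1}$ to each block, turning $\PP_\delta$ into a product indexed by $\cf(\delta)<\kappa$. A secondary induction, together with $\kappa$-closure to piece together descending sequences of conditions that successively decide membership, then reduces to the high-cofinality case. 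The technical friction will be in checking that the block system satisfies the theorem's hypothesis in the symmetric extension of the preceding blocks (rather than of the preceding coordinates), so that the inductive framework actually closes up, and in verifying that the support-reduction lemma transports faithfully through this reindexing.
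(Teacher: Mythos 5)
Your overall architecture matches the paper's: induction on $\delta$, reduction of a symmetric name $\dot x$ (of minimal rank, so that only names $\check u$ appear in it) to a name supported on a set $S$ of $<\kappa$ coordinates via homogeneity, and the observation that when $\cf(\delta)\geq\kappa$ the set $S$ is bounded and the induction hypothesis applies. (One small simplification the paper makes in the remaining case: rather than blocking along a cofinal sequence, it just reindexes the sub-product $\prod_{\alpha\in S}\QQ_\alpha$ by $\otp(S)<\kappa$ and applies the induction hypothesis, which reduces everything to the single case $\delta<\kappa$ with full support; your block decomposition creates exactly the ``technical friction'' you anticipate, and it is avoidable.)

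There is, however, a genuine gap at the final step, and it is the one place where \emph{strong} homogeneity (as opposed to plain homogeneity) is actually needed. Once you are down to a full-support product over an index set of size $<\kappa$, you still have to produce a single condition that decides \emph{every} statement $\check u\in\dot x$. Your proposal to ``piece together descending sequences of conditions that successively decide membership'' using $\kappa$-closure cannot do this: if $\forces\dot x\subseteq\check a$, the set $a$ will in general have size $\geq\kappa$ (indeed $a$ is essentially $V_\eta$ for arbitrary $\eta$), so no descending sequence of length $<\kappa$ exhausts the membership statements, and $\kappa$-closure buys you nothing further. The paper's proof closes this as follows: for each $\alpha\in S$ choose a maximal antichain $D_\alpha$ of conditions fixed pointwise by $H_\alpha$ below which $H_\alpha$ witnesses the homogeneity of the cone --- this is precisely what strong homogeneity supplies. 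Since $|S|<\kappa$ the full-support product $\prod_{\alpha\in S}D_\alpha$ consists of genuine conditions and is a maximal antichain; and any $d$ in it decides every $\check u\in\dot x$ at once, because two extensions of $d$ deciding $\check u\in\dot x$ differently could be intertwined by an automorphism in $\prod_{\alpha\in S}H_\alpha\subseteq\sym(\dot x)$, a contradiction. Below each such $d$ the name $\dot x$ is therefore equivalent to a ground model set. Your outline invokes strong homogeneity only for the support-reduction lemma (where ordinary homogeneity already suffices), and never in this essential antichain argument, so as written the limit case does not close.
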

The proof is quite similar to the proof of Theorem~9.2 in \cite{Karagila:2016}. So we only sketch the main ideas behind it.
\begin{proof}[Sketch of Proof]
We prove this by induction on $\delta$. For $\delta=0$ and successor steps this is trivial. So we can assume that $\delta$ is a limit ordinal.

Suppose that $\dot x$ is a name of a new subset of minimal rank. In particular we can assume that if $\dot y$ appears in $\dot x$, then $\dot y$ has the form $\check u$ for some $u$ in the ground model.

If $\dot x\in\HS$, then there is a sequence of groups $\tup{H_\alpha\mid\alpha<\delta}$ such that $H_\alpha\in\sF_\alpha$ and $|\{\alpha\mid H_\alpha\neq\sG_\alpha\}|<\kappa$, denote this set as $S$. By homogeneity it follows that if $p\forces\check u\in\dot x$, then $p\restriction S\forces\check u\in\dot x$, and similarly for $p\forces\check u\notin\dot x$. 

We can assume, therefore, that $\dot x$ is in fact a $(\prod_{\alpha\in S}\QQ_\alpha)$-name, note that this product is a full support product. If $\cf(\delta)\geq\kappa$, then $S$ is bounded in $\delta$, so by the induction hypothesis it is a ground model set. So we may assume that $\cf(\delta)<\kappa$. However, since we only care about the coordinates in $S$, this is really a product of $\otp(S)$ forcings, which again the induction hypothesis deals with whenever $\delta\geq\kappa$. So we may assume that $S=\delta<\kappa$.

Finally, for each $\alpha<\delta$ find a maximal antichain $D_\alpha$ of conditions which are fixed pointwise by $H_\alpha$. By strong homogeneity, we can assume that $H_\alpha$ witnesses the homogeneity of the cone below each condition in $D_\alpha$. It follows, therefore, that any condition in $\prod_{\alpha<\delta} D_\alpha$ decides all the statements of the form $\check u\in\dot x$. Since the product is a full support product, this produces a maximal antichain in $\prod_{\alpha<\delta}\QQ_\alpha$. Therefore $\dot x$ is equivalent to a ground model name.
\end{proof}
\begin{remark}
\begin{enumerate}
\item It might feel like the proof shows that no new sets are added, since the assumption on the rank was not used. However it was used in the successor step. Any new sets added by successor steps, where we actually force over the intermediate model (rather than taking a limit of some sort). 
\item Much like the case of the preservation theorem in \cite{Karagila:2016}, this too can be replaced by any kind of hierarchy which is sufficiently nice.
\item It is not hard to see that the resulting model is $\kappa$-closed in the full generic extension. Therefore by \cite{Karagila:2018c} $\DC_{<\kappa}$ holds as well.
\item We can replace each single symmetric system by a finite iteration of symmetric systems, provided that the conditions about closure and not adding sets still hold.
\end{enumerate}
\end{remark}
\begin{corollary}
If $\tup{\QQ_\alpha,\sG_\alpha,\sF_\alpha\mid\alpha\in\Ord}$ is such that for all $\eta$ there is some $\alpha^*$ such that for all $\alpha>\alpha^*$, in the symmetric extension given by the $\kappa$-support product of $\prod_{\xi<\alpha}\QQ_\xi$, taking the symmetric extension given by $\tup{\QQ_\alpha,\sG_\alpha,\sF_\alpha}$ does not add sets of rank $\eta$, then the intermediate model satisfies $\ZF+\DC_{<\kappa}$.
\end{corollary}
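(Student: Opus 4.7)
The plan is to verify that the intermediate model $N = \bigcup_{\alpha \in \Ord} M_\alpha$, where $M_\alpha$ denotes the symmetric extension associated to the $\kappa$-support product $\prod_{\xi<\alpha}\QQ_\xi$, satisfies $\ZF$, and then invoke the third item of the remark above for $\DC_{<\kappa}$. As is standard for class-length constructions of this kind, the only axioms requiring attention are replacement and power set, and for both it suffices to show that $V_\eta^N$ is a set for every ordinal $\eta$.

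First I would fix $\eta$ and pick $\alpha^*$ as supplied by the hypothesis. I would then apply \autoref{thm:preservation} to the tail $\tup{\QQ_\alpha,\sG_\alpha,\sF_\alpha \mid \alpha^* \le \alpha < \beta}$, viewed as a $\kappa$-support product over the symmetric extension $M_{\alpha^*}$, for every ordinal $\beta > \alpha^*$. The hypotheses transfer directly: each iterand is strongly homogeneous, $\kappa$-closed, and has a $\kappa$-complete normal filter of subgroups, while by the choice of $\alpha^*$ no iterand past $\alpha^*$ adds sets of rank $\le \eta$ to the symmetric extension below it. Consequently the tail product adds no sets of rank $\eta$ over $M_{\alpha^*}$, so $V_\eta^{M_\beta} = V_\eta^{M_{\alpha^*}}$ for every $\beta \geq \alpha^*$, and therefore $V_\eta^N = V_\eta^{M_{\alpha^*}}$, which is a set.

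From this, replacement and power set are immediate in $N$: given a definable function with domain $a \subseteq V_\eta^N$, its range lies in some $V_{\eta'}^N$, which is a set; and $\mathcal{P}^N(a) \subseteq V_\eta^N$ whenever $a \in V_{\eta-1}^N$. The remaining $\ZF$ axioms are inherited in the usual way for unions of symmetric extensions. For $\DC_{<\kappa}$, $\kappa$-closure of each $\QQ_\alpha$ together with the use of $\kappa$-supports makes $N$ $\kappa$-closed inside the full generic class extension, and the result of \cite{Karagila:2018c} cited in the remark then delivers $\DC_{<\kappa}$.

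The main subtle point, and what I expect to be the obstacle if one writes this out in full, is the legitimate factorization of the $\kappa$-support class product as the product up to $\alpha^*$ followed by the tail product in $M_{\alpha^*}$, together with the verification that strong homogeneity, $\kappa$-closure, and $\kappa$-completeness of the filters survive passage to $M_{\alpha^*}$ in the sense required by \autoref{thm:preservation}. Once this bookkeeping is set up correctly — essentially analogous to the factorization arguments already used implicitly in the lemma of the proof of Morris' theorem — the corollary follows at once by applying the preservation theorem to each tail.
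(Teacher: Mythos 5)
Your proposal is correct and follows essentially the same route the paper intends: the paper's own proof is only the one-line remark that ``the power sets are fixed once we stop adding sets of sufficiently high rank, and Replacement holds for a similar reason,'' deferring to \cite{Karagila:2016}, and your argument --- applying \autoref{thm:preservation} to the tail past $\alpha^*$ to see that $V_\eta$ stabilizes, deducing Power Set and Replacement, and invoking the $\kappa$-closure remark together with \cite{Karagila:2018c} for $\DC_{<\kappa}$ --- is exactly the intended fleshing-out of that sketch. The factorization bookkeeping you flag is indeed the only point requiring care, and you have identified it correctly.
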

The proof here is exactly the same proof as in \cite{Karagila:2016}. The idea behind it is that the power sets are fixed once we stop adding sets of sufficiently high rank, and Replacement holds for a similar reason.

\begin{theorem}[The $\kappa$-Morris model]
Suppose that $V\models\GCH$, then for every regular $\kappa$, there is an extension $M$ of $V$ which satisfies the following properties:
\begin{enumerate}
\item $\cf(\alpha)^M=\cf(\alpha)^V$ for all $\alpha$.
\item $\ZF+\DC_{<\kappa}$ holds.
\item For every $\alpha$, there is a set $A_\alpha$ which is the union of $\kappa$ sets of size $\kappa$, and $\mathcal P(A_\alpha)$ can be mapped onto $\omega_\alpha$.
\item There is no extension of $M$ to a model of $\DC_\kappa$ with the same ordinals.
\end{enumerate}
\end{theorem}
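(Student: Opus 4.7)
The plan is to adapt the two-step local construction of \S\ref{sect:local} by replacing ``$\omega$'' with ``$\kappa$'' throughout, and then to globalize using a $\kappa$-support class product controlled by \autoref{thm:preservation}. For each ordinal $\alpha$, set $\lambda_\alpha=\omega_\alpha$. The first symmetric system $\tup{\QQ_{\alpha,0},\sG_\alpha,\sF_\alpha}$ is $\Add(\kappa,\kappa\times\kappa\times\lambda_\alpha)$ with conditions of support size ${<}\kappa$, producing canonical names $\dot x_{n,m,\beta}$, $\dot a_{n,m}$, $\dot A_n$, and $\vec A$ exactly as in \S\ref{sect:local}, so that $\bigcup_{n<\kappa}\dot A_n$ names a union of $\kappa$ sets of size $\kappa$. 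The automorphism group is the wreath product $\{\id\}\wr S_\kappa\wr S_{\lambda_\alpha}$, and $\sF_\alpha$ is the $\kappa$-complete normal filter generated by $\fix(E)$ for $E\in[\kappa\times\kappa]^{<\kappa}$. The symmetry arguments of \autoref{prop:countable-union-is-uncountable} carry over essentially verbatim to show that no uniform enumeration of $A_\alpha:=\bigcup A_n$ as a $\kappa$-union of $\kappa$-sized sets lies in $\HS$.

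For the second step I form the choice tree $T=\bigcup_{n<\kappa}\prod_{k<n}A_k$ and define $\QQ_{\alpha,1}$ using $\kappa$-support conditions $\tup{t,f_t}$ with $t\in T^{\lambda_\alpha\times\kappa}$ and disjointing function $f_t\colon\power(\supp t)\to\kappa$. Let $\sH_\alpha$ consist of the column-preserving permutations of $\lambda_\alpha\times\kappa$ and $\sK_\alpha$ be the $\kappa$-complete normal filter generated by $\fix(E)$ for $E\in[\lambda_\alpha\times\kappa]^{<\kappa}$. The names $\dot\cB_\beta=\{\dot B_{\beta,n}\mid n<\kappa\}^\bullet$ for $\beta<\lambda_\alpha$ then give a symmetric surjection from $\power(A_\alpha)$ onto $\lambda_\alpha$. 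The main technical work is to reprove \autoref{lem:upwards-homogeneity} for ${<}\kappa$-sized supports: one must simultaneously shift ${<}\kappa$-many coordinates of $t$ via $\QQ_{\alpha,0}$-automorphisms while preserving the canonical name of a chosen ``stem'', where the $\kappa$-closure of $\QQ_{\alpha,0}$ provides the ``fresh room'' needed to relocate column data into untouched parts of $\kappa$. Together with the analog of \autoref{cor:gnd-model-names}, this yields the analog of \autoref{cor:dist}: the second step adds no new sets of rank $\eta$ whenever $|V_\eta|<\lambda_\alpha$.

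Each $\QQ_\alpha:=\QQ_{\alpha,0}\ast\dot\QQ_{\alpha,1}$ is then $\kappa$-closed, strongly homogeneous, and carries a $\kappa$-complete filter. Taking the $\kappa$-support class product, and observing that for any fixed rank $\eta$ all systems past some $\alpha^*$ (those with $\lambda_\alpha>|V_\eta|$) add no sets of rank $\eta$, the corollary following \autoref{thm:preservation} yields $\ZF$ in the intermediate model $M$; remark~(3) combined with \cite{Karagila:2018c} then gives $\DC_{<\kappa}$. Cofinality preservation follows from $\kappa$-closure (for cofinalities ${\leq}\kappa$) together with a standard Easton-style bookkeeping argument using $\GCH$ in the ground model (for higher cofinalities), mirroring the footnote in the proof of Morris' theorem. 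Finally, if $N\supseteq M$ were a model of $\ZF+\DC_\kappa$ with the same ordinals, then each $A_\alpha$ would be well-orderable in $N$ as a $\kappa$-union of well-orderable $\kappa$-sized sets (applying $\AC_\kappa$, which follows from $\DC_\kappa$, to choose the enumerations simultaneously), so $|\power(A_\alpha)|^N\leq 2^\kappa$; but $\power(A_\alpha)$ surjects onto $\omega_\alpha$ in $M$, hence in $N$, forcing $\omega_\alpha\leq(2^\kappa)^N$ for every ordinal $\alpha$, which is absurd.

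The main anticipated obstacle is the ${<}\kappa$-sized adaptation of \autoref{lem:upwards-homogeneity}: the delicate finite combinatorics of the original proof must be extended to handle simultaneous rearrangement of ${<}\kappa$-many branches while maintaining compatibility of both the $\QQ_{\alpha,0}$- and $\QQ_{\alpha,1}$-parts of the conditions. Verifying that the disjointing-function bookkeeping still permits the required $\PP$-automorphisms to be assembled, and that the final compatibility step (analogous to using $\pi_{n,m}$ to push the $\PP$-domain aside) generalizes to ${<}\kappa$-many coordinates at once, is where the bulk of the work will lie; the rest of the construction is then essentially formal given the preservation machinery already in place.
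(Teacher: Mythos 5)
Your overall strategy is the one the paper intends (the paper's proof is essentially the single sentence ``replace $\omega$ by $\kappa$ and check the hypotheses of the preservation theorem''), and your identification of the $<\kappa$-support version of \autoref{lem:upwards-homogeneity} as the main technical burden is correct. However, there is a concrete parameter error that makes your construction fail as stated: you take the first-step forcing at stage $\alpha$ to be $\Add(\kappa,\kappa\times\kappa\times\lambda_\alpha)$ with the \emph{same} $\kappa$ at every stage. In the original construction there are two distinct parameters: the index set $\omega\times\omega$ of the union (this is what gets replaced by $\kappa\times\kappa$), and the cardinal $\omega_{\alpha+1}$ over which the Cohen sets are added, which \emph{grows with $\alpha$}. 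The growth of that second parameter is exactly what makes the lemma ``$\QQ_\alpha$ adds no sets of rank $\eta$ with $|V_\eta|<\aleph_{\alpha+1}$'' true, and hence what lets the preservation corollary apply. With your choice, every single stage of the class-length product adds fresh Cohen subsets of the fixed cardinal $\kappa$ (each $\dot x_{n,m,\beta}$ is symmetric with a singleton support), so for any $\eta>\kappa$ there is no $\alpha^*$ past which sets of rank $\eta$ stop being added --- the hypothesis of the corollary to \autoref{thm:preservation} fails --- and, worse, the resulting class contains a proper class of distinct subsets of $\kappa$, so Power Set fails outright. Your claim that ``all systems past some $\alpha^*$ (those with $\lambda_\alpha>|V_\eta|$) add no sets of rank $\eta$'' conflates the parameter governing \autoref{cor:dist} (which controls only the \emph{second} step) with the closure parameter of the \emph{first} step.

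The repair is to keep both parameters: at stage $\alpha$ use $\Add(\mu_\alpha,\kappa\times\kappa\times\mu_\alpha)$ with conditions of size $<\mu_\alpha$, where $\mu_\alpha$ is an increasing sequence of regular cardinals with $\mu_\alpha\geq\max(\kappa,\omega_{\alpha+1})$ and $\mu_\alpha^{<\mu_\alpha}=\mu_\alpha$ (available under $\GCH$); the second step then uses $\mu_\alpha\times\kappa$ many branches with $<\kappa$-sized conditions, giving the surjection of $\power(A_\alpha)$ onto $\mu_\alpha\geq\omega_\alpha$. Each iterand is then $\kappa$-closed (indeed $\mu_\alpha$-closed in the first part), and the ``eventually adds no sets of rank $\eta$'' condition holds because $\mu_\alpha\to\infty$. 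With this correction your remaining steps --- the $\DC_{<\kappa}$ argument via remark (3) and \cite{Karagila:2018c}, the Easton-style cofinality computation, and the final $\DC_\kappa\Rightarrow\AC_\kappa$ argument showing $\omega_\alpha$ is bounded by $(2^\kappa)^N$ for all $\alpha$ --- go through as you describe.
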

The proof does by replacing $\omega$ by $\kappa$ in the construction given in the previous section, noting that all the conditions for the preservation theorems are satisfied.

This model shows an example of a model of $\ZF+\DC_{<\kappa}$ which is not $\kappa$-closed in any model of $\ZFC$ with the same ordinals, which was pointed out in page~6 of \cite{Karagila:2018c} as a possible limitation of Theorem~3.5 there. This also answers, in part, question number 4 in Morris' thesis, showing that $\DC_{<\kappa}$ is not sufficient to prove that a model can be extended to a model of $\ZFC$ with the same ordinals.

\section{Is an even more general theorem possible?}
\begin{question}
Is it consistent that for every non-empty set $X$, there is a set $A_X$ which is a countable union of countable sets, and $\mathcal P(A_X)$ can be mapped onto $X$?
\end{question}
If a positive answer is at reach, first note that it is enough to consider $X$ which has the form $V_\alpha$ for some $\alpha$. One way of obtaining such result is by iterating similarly to the Morris construction, at each stage adding $V_\alpha$ subsets rather than $\omega_\alpha$. However, it is not immediately clear that this solution would work, or generalize properly as well.

This leads us to the following question, which is of independent interest and has importance to many other results in this field of choiceless set theory.
\begin{question}
Assume that $X$ is a set of ``regular cardinality'', namely there is no partition of $X$ into $<|X|$ sets of size $<|X|$.\footnote{It might be necessary to describe this in terms of surjections instead.} Is there a forcing which adds subsets to $X$ without upsetting the cardinal structure below $X$? What sort of limitations are necessary for such forcing to exist, and what are the necessary assumptions on it so that it does not collapse ``too many'' cardinals?
\end{question}
Many terms in this question are vaguely formulated, but for a good reason. There are very small details that reveal themselves only after considerable time working towards a solution, some of which might not even be known to us at this point. It is therefore better to remain vague.

To see why this is an interesting question, assume that $\kappa$ is a regular cardinal. If we force with $\Add(\kappa,1)$, we must add a bijection between $\kappa$ and $\kappa^{<\kappa}$. This implies a well-ordering of some initial segment of the universe was added. Is it possible to add a subset to $\kappa$ without adding any sets of rank $\alpha$ such that there is no surjection from $V_\alpha$ onto $\kappa$? A partial positive answer could be utilized in many ways to produce many choiceless results via iterations of fairly straightforward constructions.

\providecommand{\bysame}{\leavevmode\hbox to3em{\hrulefill}\thinspace}
\providecommand{\MR}{\relax\ifhmode\unskip\space\fi MR }
\providecommand{\MRhref}[2]{%
  \href{http://www.ams.org/mathscinet-getitem?mr=#1}{#2}
}
\providecommand{\href}[2]{#2}

\end{document}